\newtheoremstyle{mio}%
	{}{} 
	{\itshape}{} 
	{\bfseries}{.}{ } 
	{#1 #2\thmnote{\mdseries~ #3}} 
\theoremstyle{mio}
\newtheorem{teor}{Theorem}[section]
\newtheorem{cor}[teor]{Corollary}
\newtheorem{prop}[teor]{Proposition}
\newtheorem{defin}[teor]{Definition}
\theoremstyle{definition}
\newtheorem{ex}[teor]{Example}
\newtheorem{oss}[teor]{Remark}
\renewcommand{\star}{\ast}
\newcommand{\inssubmod}{\mathbf{F}}
\newcommand{\insstar}{\mathrm{Star}}
\newcommand{\inssmstar}{\mathrm{(S)Star}}
\newcommand{\inssemistar}{\mathrm{SStar}}
\newcommand{\inssemisupp}[1]{\mathrm{SStar}^{#1}}
\newcommand{\inssmsupp}[1]{\mathrm{(S)Star}^{#1}}
\newcommand{\insfstar}{\mathrm{FStar}}
\newcommand{\Jac}{\mathrm{Jac}}
\newcommand{\SkOver}{\mathrm{SkOver}}
\newcommand{\valut}{\mathbf{v}}
\DeclareMathOperator{\supp}{supp}
\DeclareMathOperator{\Spec}{Spec}
\newcommand{\hiSpec}{\mathrm{Spec_{hi}}}
\newcommand{\boldnu}{\boldsymbol{\nu}}
\newcommand{\tildhom}{\widetilde{\hom}}
\DeclareMathOperator{\Max}{Max}
\DeclareMathOperator{\Over}{Over}
\newcommand{\insfracid}{\mathcal{F}}
\newcommand{\insZ}{\mathbb{Z}}
\newcommand{\inv}[1]{\frac{1}{#1}}
\newcommand{\insN}{\mathbb{N}}
\newcommand{\inN}{\in\mathbb{N}}
\newcommand{\insQ}{\mathbb{Q}}
\title[Semistar operations on semilocal Pr\"ufer domains]{The sets of star and semistar operations on semilocal Pr\"ufer domains}
\author{Dario Spirito}
\address{Dipartimento di Matematica e Fisica, Universit\`a degli Studi ``Roma Tre'', Roma, Italy}
\email{spirito@mat.uniroma3.it}
\keywords{Star operations, semistar operations, Pr\"ufer domains, Jaffard families, homeomorphically irreducible trees}
\subjclass[2010]{13A15, 13A18, 13F05, 13G05}
\begin{document}

\begin{abstract}
We study the sets of semistar and star operation on a semilocal Pr\"ufer domain, with an emphasis on which properties of the domain are enough to determine them. In particular, we show that these sets depend chiefly on the properties of the spectrum and of some localizations of the domain; we also show that, if the domain is $h$-local, the number of semistar operations grows as a polynomial in the number of semistar operations of its localizations.
\end{abstract}

\maketitle

\section{Introduction}
Starting from the works of Krull \cite{krull_breitage_I-II}, Gilmer \cite[Chapter 32]{gilmer} and Okabe and Matsuda \cite{okabe-matsuda}, the study of star and semistar operations has usually followed the route of studying properties holding for some classes of these operations, or of some particular cases: for example, studying the properties of stable, spectral \cite{anderson_two_2000,anderson_intersections_2005,localizing-semistar} or eab operations (see e.g. \cite{fontana_loper-eab} and \cite[Section 4]{fifolo_transactions}), or studying the $t$- \cite{samuel_factoriel,bouvier_zaf_1988} or the $b$-operation \cite{swanson_huneke}.

More recently, there has been interest in studying these closures from a global perspective, that is, in studying the properties of the whole set: for example, studying a natural topology on the set of semistar operations \cite{topological-cons,spettrali-eab}, or studying the relationship between semistar and semiprime operations \cite{epstein-corresp}. In particular, Houston, Mimouni and Park have been interested in the study of the cardinality of the set of star operations in the Noetherian setting \cite{houston_noeth-starfinite,starnoeth_resinfinito}, as well as in the integrally closed case (with special interest in the case of Pr\"ufer domains) \cite{twostar,hmp_finite,hmp-overprufer}: in \cite{hmp_finite} they showed that there is a strong link between the spectrum of a semilocal Pr\"ufer domain $D$ and the number of star operations on $D$, while in \cite[Theorem 4.3]{hmp-overprufer} they calculated the number of star operations when the spectrum of $D$ is Y-shaped. With different methods, Elliott showed that the structure of the set of semistar operations on a Dedekind domain $D$ (in particular, its cardinality) depends only on the number of maximal ideals of $D$ \cite{elliott-dedekind}.

In this paper, we deepen this study, linking it to the concept of Jaffard family (whose tie with star operations was established in \cite{starloc}) and extending it to semistar operations. In particular, we focus on which information about a Pr\"ufer semilocal domain $D$ is sufficient to determine the sets $\inssemistar(D)$ and $\insstar(D)$ of, respectively, semistar and star operations. We show in Theorem \ref{teor:semistar} that $\inssemistar(D)$ can be determined by joining some geometric data (the spectrum of $D$, or more precisely the homeomorphically irreducible tree underlying $\Spec(D)$) and some algebraic data (the set of semistar operations on some valuation rings of the form $D_P/QD_P$). We then show (Theorem \ref{teor:(semi)star}) that, to determine $\insstar(D)$, we must also add some information about the maximal ideals of $D$ (namely, if they are principal). We also show (Corollary \ref{cor:semistar-hloc}) that the cardinality of $\inssemistar(D)$, when $D$ is an $h$-local domain with $n$ maximal ideals, is a polynomial of degree $n\cdot 2^{n-1}$ in the number of semistar operations on the localizations $D_P$.

\section{Notation and preliminaries}
\subsection{Closures and semistar operations}
Let $(\mathcal{P},\leq)$ be a partially ordered set. A \emph{closure operation} on $\mathcal{P}$ is a map $c:\mathcal{P}\longrightarrow\mathcal{P}$ such that:
\begin{enumerate}
\item $c$ is \emph{extensive}: $x\leq c(x)$ for every $x\in\mathcal{P}$;
\item $c$ is \emph{order-preserving}: if $x\leq y$, then $c(x)\leq c(y)$;
\item $c$ is \emph{idempotent}: $c(c(x))=c(x)$ for every $x\in\mathcal{P}$.
\end{enumerate}
If $x\in\mathcal{P}$ is such that $x=c(x)$, then $x$ is said to be \emph{$c$-closed}.

Let now $D$ be an integral domain with quotient field $K$; let $\inssubmod(D)$ be the set of $D$-submodule of $K$, and let $\insfracid(D)$ be the set of \emph{fractional ideals} of $D$, i.e., of the $I\in\inssubmod(D)$ such that $xI\subseteq D$ for some $x\in K$, $x\neq 0$.

If $\star:I\mapsto I^\star$ is a closure operation on $\inssubmod(D)$ or $\insfracid(D)$, let $(\mathbf{S})$ be the following property:

\smallskip
\indent $(\mathbf{S})$: $x\cdot I^\star=(xI)^\star$ for every $x\in K$ and every $I$ where $\star$ is defined.

\smallskip

This property is usually used to define the following three classes of closure operations:
\begin{itemize}
\item \emph{semistar operations} are closure operations on $\inssubmod(D)$ with property $(\mathbf{S})$;
\item \emph{(semi)star operations} are semistar operations $\star$ such that $D=D^\star$;
\item \emph{star operations} are closure operations $\star$ on $\insfracid(D)$ with property $(\mathbf{S})$ and such that $D=D^\star$.
\end{itemize}
We denote the sets of these closures, respectively, as $\inssemistar(D)$, $\inssmstar(D)$ and $\insstar(D)$.

We shall need a fourth class of closure operations:
\begin{defin}
A \emph{fractional star operation} on $D$ is a closure operation on $\insfracid(D)$ with property $(\mathbf{S})$. We denote their set by $\insfstar(D)$.
\end{defin}

These four sets are all partially ordered, with $\star_1\leq\star_2$ if $I^{\star_1}\subseteq I^{\star_2}$ for every $I$ (belonging to $\insfracid(D)$ or $\inssubmod(D)$, according to the case).

The identity map, $I\mapsto I$, is a closure operation, and it is denoted by $d$ both in the semistar and in the star setting.

\subsection{Localizations of star operations}
Let $\star\in\insstar(D)$ and let $T$ be a flat overring of $D$. Then, $\star$ is said to be \emph{extendable} to $T$ if the map
\begin{equation*}
\begin{aligned}
\star_T\colon \insfracid(T) & \longrightarrow \insfracid(T)\\
IT & \longmapsto I^\star T
\end{aligned}
\end{equation*}
is well-defined (where $I$ is a fractional ideal of $D$) \cite[Definition 3.1]{starloc}. In this case, $\star_T$ is a star operation. The same definition can be given in the case of fractional star operations and semistar operations; it works well in the former case, but poorly in the latter \cite[Remark 5.12]{starloc}.

\subsection{Jaffard families and localizations}\label{sect:jaff}
Let $D$ be an integral domain with quotient field $K$. An \emph{overring} of $D$ is a ring between $D$ and $K$; the set of overrings of $D$ is denoted by $\Over(D)$. A set $\Theta$ of overrings of $D$ is a \emph{Jaffard family} of $D$ if the following properties hold \cite[Proposition 4.3]{starloc}:
\begin{itemize}
\item $I=\bigcap\{IT\mid T\in\Theta\}$ for every ideal $I$ of $D$;
\item $\Theta$ is \emph{locally finite} (i.e., for every $x\in K$, $x$ is not invertible in at most a finite number of $T\in\Theta$);
\item $K\notin\Theta$;
\item $TS=K$ for every $T\neq S$ in $\Theta$.
\end{itemize}
(This is only one possible definition; see \cite[beginning of Section 6.3 and Theorem 6.3.5]{fontana_factoring} for two different characterizations.) In particular, if $\Theta$ is a Jaffard family of $D$, then \cite[Theorem 6.3.1]{fontana_factoring}:
\begin{itemize}
\item for every prime ideal of $P$ there is exactly one $T\in\Theta$ such that $PT\neq T$; in particular, $\Theta$ induces a partition on $\Max(D)$;
\item $I=\bigcap\{IT\mid T\in\Theta\}$ for every $I\in\inssubmod(D)$;
\item every $T\in\Theta$ is flat over $D$.
\end{itemize}

If $\Theta$ is a Jaffard family of $D$, then, for every $T\in\Theta$, each star operation on $D$ is extendable to $T$; moreover, the map
\begin{equation*}
\begin{aligned}
\lambda_\Theta\colon\insstar(D) & \longrightarrow\prod_{T\in\Theta}\insstar(T) \\
\star & \longmapsto (\star_T)_{T\in\Theta},
\end{aligned}
\end{equation*}
is an order isomorphism \cite[Theorem 5.4]{starloc}. An inspection of the proof of this result shows that the same reasoning also gives a bijection from $\insfstar(D)$ to $\prod\{\insfstar(T)\mid T\in\Theta\}$. On the other hand, the analogue of this result does not hold for semistar operations \cite[Remark 5.12]{starloc}.

\subsection{The standard decomposition}
Let $D$ be a Pr\"ufer domain. Two maximal ideals $M$ and $N$ are \emph{dependent} if there is a nonzero prime ideal $P\subseteq M\cap N$, or equivalently if $D_MD_N\neq K$. Since the spectrum of a Pr\"ufer domain is a tree, dependence is an equivalence relation. Let $\{\Delta_\lambda\mid\lambda\in\Lambda\}$ be the set of equivalence classes of this relation, and define $T_\lambda:=\bigcap\{D_P\mid P\in\Delta_\lambda\}$; we call the set $\{T_\lambda\mid\lambda\in\Lambda\}$ the \emph{standard decomposition} of $D$. If $D$ is semilocal, or more generally if $\Max(D)$ is a Noetherian space, then the standard decomposition of $D$ is a Jaffard family of $D$ \cite[Proposition 6.2]{starloc}.

\subsection{Semistar operations and quotients}
Let $D$ be a Pr\"ufer domain, and suppose there is a nonzero prime ideal $P$ contained in the Jacobson radical $\Jac(D)$ of $D$. Then, $PD_P=P$, and so $D_P$ is a fractional ideal of $D$; it follows that every overring of $D$, except the quotient field $K$, is a fractional ideal of $D$. Hence, in this case $\insfstar(D)=\inssemistar(D)\setminus\{\wedge_{\{K\}}\}$ and $\inssmstar(D)=\insstar(D)$, where $\wedge_{\{K\}}$ is the semistar operation sending every nonzero $I\in\inssubmod(D)$ to $K$.

Let $\varphi:D_P\longrightarrow D_P/P=:k$ be the quotient map; then, $A:=D/P$ is a subring of $k$ with quotient field $k$. Let $\star\in\inssemistar(D)$ be a semistar operation such that $P=P^\star$. Then, $D_P=(P:P)$ is also $\star$-closed, and thus, for every $I\in\inssubmod(D)$ such that $P\subseteq I\subseteq D_P$, we have $P\subseteq I^\star\subseteq D_P$. Following \cite{fontana-park} and \cite{hmp_finite}, we define a semistar operation $\star_\varphi$ on $D/P$ by
\begin{equation*}
I^{\star_\varphi}:=\varphi\left(\varphi^{-1}(I)^\star\right)\quad\text{for every~}I\in\inssubmod(D/P).
\end{equation*}
Conversely, if $\sharp\in\inssemistar(D/P)$, then we can define a map $\sharp^\varphi$ from $\inssubmod(D)$ to itself in the following way: let $\valut_P$ be the valuation relative to $D_P$, and let $I\in\inssubmod(D)$. Then, we set $I^{\sharp^\varphi}:=I$ if $\valut_P(I)$ has no infimum in $\valut_P(K)$; otherwise, if $\valut_P(\alpha)=\inf\valut_P(I)$, then $P\subseteq\alpha^{-1}I\subseteq D_P$, and we put
\begin{equation*}
I^{\sharp^\varphi}:=\alpha\cdot\varphi^{-1}\left[(\varphi(\alpha I))^\sharp\right].
\end{equation*}

We have the following.
\begin{prop}\label{prop:cutting}
Let $D,P,A,\varphi$ as above; let $\Delta_1:=\{\star\in\inssemistar(D)\mid P=P^\star\}$ and $\Delta_2:=\{\star\in\inssemistar(D)\mid P\neq P^\star\}$. Then:
\begin{enumerate}[(a)]
\item\label{prop:cutting:Delta1} The maps
\begin{equation*}
\begin{aligned}
\Delta_1 & \longrightarrow \inssemistar(D/P)\\
\star & \longmapsto \star_\varphi
\end{aligned}
\quad\text{~and~}\quad
\begin{aligned}
\inssemistar(D/P) & \longrightarrow \Delta_1\\
\sharp & \longmapsto \sharp^\varphi
\end{aligned}
\end{equation*}
are well-defined order isomorphisms, inverses one of each other, that restricts to isomorphisms between $\inssmstar(D)=\insstar(D)$ and $\inssmstar(D/P)$.
\item\label{prop:cutting:Delta2} The map
\begin{equation*}
\begin{aligned}
\iota_P\colon\Delta_2 & \longrightarrow\inssemistar(D_P)\setminus\{d\} \\
\star & \longmapsto \star|_{\inssubmod(D_P)}
\end{aligned}
\end{equation*}
is a well-defined order isomorphism.
\item\label{prop:cutting:ordine} If $\star_1\in\Delta_1$ and $\star_2\in\Delta_2$ then $\star_1\leq\star_2$.
\end{enumerate}
\end{prop}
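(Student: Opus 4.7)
The argument rests on two preliminary facts. \textbf{Restriction lemma:} for every $\star\in\inssemistar(D)$ and every $D_P$-submodule $J\subseteq K$, $J^\star$ is again a $D_P$-submodule; indeed, each $s\in D\setminus P$ is a unit of $D_P$, so $s^{-1}J=J$, and $(\mathbf{S})$ gives $s^{-1}J^\star=(s^{-1}J)^\star=J^\star$. \textbf{Key lemma:} if $V$ is a valuation ring with maximal ideal $M$ and value group $\Gamma$, and $\flat\in\inssemistar(V)$ satisfies $M^\flat=M$, then $\flat=d$. To see this, $(\mathbf{S})$ and $M^\flat=M$ force $(aM)^\flat=aM$ for every $a\in K^*$; since $V\subseteq aM$ whenever $v(a)<0$, order-preservation together with $\bigcap_{v(a)<0}aM=V$ yields $V^\flat=V$, whence every principal $V$-submodule is $\flat$-closed; the same intersection trick, applied to any non-principal cut $J$ at $\alpha\notin\Gamma$ (so $J=\bigcap_{v(a)<\alpha}aM$), gives $J^\flat=J$.

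For (a), first check that $D_P^\star=D_P$ whenever $\star\in\Delta_1$: for $x\in P$ we have $xD_P\subseteq P=P^\star$, so $(\mathbf{S})$ yields $xD_P^\star=(xD_P)^\star\subseteq P$, and hence $D_P^\star\subseteq(P:P)=D_P$. Thus $\star$ preserves the interval $[P,D_P]$, which $\varphi$ identifies bijectively with $\inssubmod(D/P)$, and the semistar axioms of $\star_\varphi$ translate directly through this bijection. The converse assignment $\sharp^\varphi$ renormalizes $I\in\inssubmod(D)$ to $\alpha^{-1}I\in[P,D_P]$ when $\inf\valut_P(I)$ is attained at $\valut_P(\alpha)$, and fixes $I$ otherwise. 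The identities $(\star_\varphi)^\varphi=\star$ and $(\sharp^\varphi)_\varphi=\sharp$ split into two cases: the attained case reduces to $(\mathbf{S})$ combined with the bijection $[P,D_P]\leftrightarrow\inssubmod(D/P)$, while the non-attained case is handled by the restriction lemma and the key lemma, which together force $\star|_{\inssubmod(D_P)}=d$ and hence $I^\star=I$ by sandwiching $I$ between $D_P$-submodules. The restriction to (semi)star operations follows from $D^\star=D\Leftrightarrow(D/P)^{\star_\varphi}=D/P$, since both sides contain $P$.

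For (b) and (c), the central identity is $I^\star=(ID_P)^\star$ for every nonzero $I\in\inssubmod(D)$ and every $\star\in\Delta_2$. By the restriction lemma, $P^\star$ is a $D_P$-submodule strictly containing $P$; since the only $D_P$-submodules of $K$ properly containing $P$ are those containing $D_P$, we get $P^\star\supseteq D_P$. For any nonzero $i\in I$, $iP\subseteq I\subseteq I^\star$ combined with $(\mathbf{S})$ yields $iD_P\subseteq iP^\star\subseteq I^\star$, so $ID_P\subseteq I^\star$ and hence $I^\star=(ID_P)^\star$ by idempotence. This identity immediately gives the injectivity of $\iota_P$; its inverse is $\flat\mapsto\bigl(I\mapsto(ID_P)^\flat\bigr)$, and the key lemma guarantees $P^\flat\neq P$ whenever $\flat\neq d$, so the construction lands in $\Delta_2$. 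For (c), the key lemma applies once more: $\star_1|_{\inssubmod(D_P)}$ is a semistar operation on $D_P$ fixing $P$, so it equals $d$; thus $(ID_P)^{\star_1}=ID_P$, and $I^{\star_1}\subseteq(ID_P)^{\star_1}=ID_P\subseteq I^{\star_2}$ by (b). The main technical obstacle throughout is the key lemma, whose proof requires careful handling of non-principal cuts in the value group of $D_P$.
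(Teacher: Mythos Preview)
Your argument is correct and follows the same route as the paper's proof: part (a) is handled via the $[P,D_P]\leftrightarrow\inssubmod(D/P)$ dictionary (which the paper delegates to \cite{hmp_finite}), part (b) via the inverse $\rho_P:\flat\mapsto(I\mapsto(ID_P)^\flat)$, and part (c) via the fact that a semistar operation on a valuation domain fixing the maximal ideal is the identity---your ``Key Lemma'', which the paper invokes without proof. One small imprecision: in the non-attained case of (a), ``sandwiching $I$ between $D_P$-submodules'' does not by itself give $I^\star=I$; what actually happens is that when $\valut_P(I)$ has no infimum it has no minimum, and then for any $i\in I$ and $d\in D_P$ there is $j\in I$ with $\valut_P(j)<\valut_P(id)$, forcing $id/j\in P\subseteq D$ and hence $id\in I$, so $I=ID_P$ is already a $D_P$-submodule and the Key Lemma applies directly.
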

\begin{proof}
\ref{prop:cutting:Delta1} The proof is entirely analogous to the proof of \cite[Lemmas 2.3 and 2.4]{hmp_finite}.

\ref{prop:cutting:Delta2} It is clear that $\iota_P$ is well-defined and order-preserving; to see that is it bijective, it is enough to note that the map $\rho_P:\inssemistar(D_P)\longrightarrow\inssemistar(D)$ such that $I^{\rho_P(\star)}:=(ID_P)^\star$ is well-defined, sends $\inssemistar(D_P)\setminus\{d\}$ to $\Delta_2$, and it is the inverse of $\iota_P$.

\ref{prop:cutting:ordine} The overring $D_P$ is $\star_1$-closed for every $\star_1\in\Delta_1$; hence, $\star_1|_{\inssubmod(D_P)}$ is a (semi)star operation on $D_P$ which closes $P$. Being $D_P$ a valuation domain, this implies that $\star_1|_{\inssubmod(D_P)}$ is the identity; therefore, $I^{\star_1}\subseteq ID_P$ for every $I\in\inssubmod(D)$. But, if $\star_2\in\Delta_2$, then $\star_2=\rho_P(\iota_P(\star_2))$, so that $I^{\star_2}\supseteq ID_P$ for every $I$. Hence, $\star_1\leq\star_2$.
\end{proof}

\subsection{Product and sum of posets}\label{sect:posets}
Let $\mathcal{P}_1,\mathcal{P}_2$ be two partially ordered set. The \emph{product} of $\mathcal{P}_1$ and $\mathcal{P}_2$, denoted by $\mathcal{P}_1\times\mathcal{P}_2$, is the partial order on the Cartesian product such that $(x_1,y_1)\leq(x_2,y_2)$ if and only if $x_1\leq x_2$ and $y_1\leq y_2$.

The \emph{ordinal sum} of $\mathcal{P}_1$ and $\mathcal{P}_2$, denoted by $\mathcal{P}_1\oplus\mathcal{P}_2$ is the partial order on the disjoint union of $\mathcal{P}_1$ and $\mathcal{P}_2$ such that the order on each $\mathcal{P}_i$ is the same, while if $x\in\mathcal{P}_1$ and $y\in\mathcal{P}_2$ then $x\leq y$ \cite[Chapter 1, \textsection 8]{birkhoff_latticetheory}.

Under this terminology, Proposition \ref{prop:cutting} can be rewritten as saying that $\inssemistar(D)$ is isomorphic to the ordinal sum of $\inssemistar(D/P)$ and $\inssemistar(D_P)\setminus\{d\}$.

\subsection{Homeomorphically irreducible trees}\label{sect:Omef-irred}
Let $\mathcal{T}$ be a finite tree. Then, $\mathcal{T}$ is said to be \emph{homeomorphically irreducible} (or \emph{series-reduced}) if no vertex has valence 2 (where the \emph{valence} of $x$ is the number of element of $\mathcal{P}$ directly linked to $x$) \cite{treelike,harari-Omefirred}. When $\mathcal{T}$ is a rooted tree, we allow the root to have valence 2 (this is in contrast with the definition in \cite{harari-Omefirred} and \cite{treelike}, but is needed for our applications).

If $\mathcal{T}$ is a (possibly infinite) rooted tree, with root $r$, $\mathcal{T}$ has a natural structure of partially ordered set, where $x\leq y$ if the (unique) path from $r$ to $y$ passes through $x$. Call $x\in\mathcal{T}$ a \emph{branching point} if $x=r$ or if there is a family $\Delta\subseteq\mathcal{T}$ of pairwise incomparable elements such that $x\notin\Delta$ but $x$ is the infimum of $\Delta$; we say that $\mathcal{T}$ is homeomorphically irreducible if each element of $\mathcal{T}$ is a branching point. If $\mathcal{T}$ is finite, it is not hard to see that this definition coincides with the previous one.

Let $\mathcal{T}$ be a rooted tree. Then, the set of all branching points of $\mathcal{T}$ is an homeomorphically irreducible tree, which we call the \emph{underlying homeomorphycally irreducible tree} associated to $\mathcal{T}$.

\section{The support of a semistar operation}
In the paper, $D$ will always indicate a Pr\"ufer domain, and $K$ its quotient field. We shall study only semilocal Pr\"ufer domains, that is, domains with only a finite number of maximal ideals; while many definitions do make sense even in a more general setting, many results do not hold outside the semilocal case. In particular, the two results we shall continuously use are the existence of a standard decomposition $\Theta$ and the following Proposition \ref{prop:skover}.

\begin{defin}\label{def:skover}
Let $D$ be a semilocal Pr\"ufer domain, and let $\Theta$ be its standard decomposition. The \emph{skeleton of $\Over(D)$}, indicated by $\SkOver(D)$, is the set of all intersections of elements of $\Theta$.
\end{defin}

In particular, $\SkOver(D)$ contains $D$ (the intersection of all elements of $\Theta$) and the quotient field $K$ (the empty intersection), as well as the elements of $\Theta$. We note that the structure (as a partially ordered set) of $\SkOver(D)$ depends uniquely on the cardinality of $\Theta$, and that $\SkOver(D)$ is closed by intersections.

The main use of $\SkOver(D)$ passes though the following proposition, which can be seen as a variant of \cite[Lemma 4.2]{hmp_finite}. 

\begin{prop}\label{prop:skover}
Let $D$ be a semilocal Pr\"ufer domain. Then, $\inssubmod(D)$ is the disjoint union of $\insfracid(A)$, as $A$ ranges in $\SkOver(D)$.
\end{prop}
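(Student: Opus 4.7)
The plan is to exhibit, for each $J\in\inssubmod(D)$, a specific $A(J)\in\SkOver(D)$ and prove that $J\in\insfracid(A(J))$ and that this is the only element of $\SkOver(D)$ with that property. Concretely, letting $\Theta = \{T_\lambda\}_{\lambda\in\Lambda}$ denote the standard decomposition (a Jaffard family by Section~\ref{sect:jaff}), I set, for nonzero $J$,
\[
\Lambda(J) := \{\lambda\in\Lambda\mid JT_\lambda\neq K\},\qquad A(J) := \bigcap_{\lambda\in\Lambda(J)}T_\lambda,
\]
where the empty intersection is understood as $K$. By construction $A(J)\in\SkOver(D)$. The edge cases $J = 0$ and $J = K$ I handle by convention (placing $0$ in $\insfracid(D)$ and $K$ in $\insfracid(K)$, the latter agreeing with $\Lambda(J)=\emptyset$).

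For existence, the identity $A(J)\cdot J = J$ comes from the Jaffard decomposition $J = \bigcap_\lambda JT_\lambda = \bigcap_{\lambda\in\Lambda(J)}JT_\lambda$ together with $A(J)\subseteq T_\lambda$ for every $\lambda\in\Lambda(J)$, which yields $A(J)\cdot J\subseteq T_\lambda\cdot JT_\lambda = JT_\lambda$; intersecting gives $A(J)J\subseteq J$. For the boundedness, each $JT_\lambda$ (with $\lambda\in\Lambda(J)$) is a proper $T_\lambda$-submodule of $K$, and since $T_\lambda$ is either a valuation domain (when $|\Delta_\lambda|=1$) or has a nonzero prime in its Jacobson radical (when $|\Delta_\lambda|\geq 2$, by the dependence of the maximals in $\Delta_\lambda$), the passage preceding Proposition~\ref{prop:cutting} gives that $JT_\lambda$ is a fractional ideal of $T_\lambda$; hence $(T_\lambda:JT_\lambda)=(T_\lambda:J)$ is nonzero and, after clearing denominators, contains a nonzero $p_\lambda\in D$. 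The product $x := \prod_{\lambda\in\Lambda(J)}p_\lambda$, a finite product lying in $D^{\ast}$, belongs to $\bigcap_{\lambda}(T_\lambda:J) = (A(J):J)$, whence $xJ\subseteq A(J)$.

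For uniqueness, suppose $J\in\insfracid(B)$ with $B = \bigcap_{\mu\in\Lambda_B}T_\mu$. The inclusion $\Lambda_B\subseteq\Lambda(J)$ is immediate: $J$ bounded over $B\subseteq T_\mu$ forces $JT_\mu$ bounded over $T_\mu$, hence $JT_\mu\neq K$. The reverse inclusion uses the Jaffard property $T_\lambda T_\mu = K$ for $\lambda\neq\mu$: if $\lambda\notin\Lambda_B$, then $T_\lambda\cdot B = K$ (since $T_\lambda\cdot T_\mu = K$ for every $\mu\in\Lambda_B$), whence $T_\lambda\cdot J = T_\lambda\cdot BJ = KJ = K$, so $\lambda\notin\Lambda(J)$. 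Therefore $\Lambda_B = \Lambda(J)$ and $B = A(J)$.

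The main obstacle is the boundedness step, and specifically the claim that every proper $T_\lambda$-submodule of $K$ is a fractional ideal of $T_\lambda$: while the valuation case is routine (proper submodules of a valuation domain correspond to upper sets of the value group and are automatically bounded), the multi-maximal case requires promoting the remark ``every overring $\neq K$ is a fractional ideal'' preceding Proposition~\ref{prop:cutting} from overrings to arbitrary submodules, which is where the common nonzero prime in $\Jac(T_\lambda)$ must do the heavy lifting.
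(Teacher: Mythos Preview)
Your plan is the paper's plan: define $A(J)$ via $\Lambda(J)=\{\lambda:JT_\lambda\neq K\}$, show $J$ is an $A(J)$-module from the Jaffard decomposition, bound $J$ over $A(J)$ by producing a conductor in each $T_\lambda$ and taking the finite product, and then argue uniqueness. Two places need completion.

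\textbf{The boundedness step.} You have correctly located the only real obstacle, and the paper fills it exactly via the nonzero prime $P\subseteq\Jac(T_\lambda)$ (such a $P$ exists for \emph{every} $T_\lambda$ in the standard decomposition, including the local case, so your case split is unnecessary). Since $P=P(T_\lambda)_P$, any nonzero $p\in P$ satisfies $p(T_\lambda)_P\subseteq T_\lambda$; in particular $(T_\lambda)_P$ is a fractional ideal of $T_\lambda$. If a $T_\lambda$-submodule $L\subsetneq K$ had $L(T_\lambda)_P=K$, then $K=L\cdot p(T_\lambda)_P\subseteq LT_\lambda=L$, a contradiction; so $L(T_\lambda)_P$ is a proper $(T_\lambda)_P$-submodule of $K$, and now your valuation-domain argument gives $a\neq 0$ with $aL\subseteq a L(T_\lambda)_P\subseteq(T_\lambda)_P$, whence $apL\subseteq T_\lambda$. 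This is precisely the ``heavy lifting'' you anticipated, and it promotes the overring remark to arbitrary submodules.

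\textbf{Uniqueness.} Your argument is slicker than the paper's element-chase, but the justification ``since $T_\lambda T_\mu=K$ for every $\mu\in\Lambda_B$'' does not yet yield $T_\lambda B=K$: you are silently commuting the product with the finite intersection $B=\bigcap_{\mu\in\Lambda_B}T_\mu$. This is valid because $T_\lambda$ is flat over $D$ (every overring of a Pr\"ufer domain is flat), so $T_\lambda\cdot\bigcap_\mu T_\mu=\bigcap_\mu T_\lambda T_\mu=K$; the paper invokes the same Bourbaki reference at this point. Once that is said, your computation $T_\lambda J=T_\lambda BJ=KJ=K$ finishes uniqueness.
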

\begin{proof}
Let $\Theta$ be the standard decomposition of $D$, let $I\in\inssubmod(D)$, and consider the set $\supp(I):=\{T\in\Theta\mid IT\neq K\}$ (which we call the \emph{support} of $I$); we claim that $I$ is a fractional ideal of $A:=\bigcap\{T\mid T\in\supp(I)\}$.

Indeed, since $\Theta$ is a Jaffard family we have $I=\bigcap\{IT\mid T\in\Theta\}$. Moreover, we can throw away the elements of $\Theta$ outside the support, so that $I=\bigcap\{IT\mid T\in\supp(I)\}$; hence, $I$ is an $A$-module. Each $T\in\Theta$ is semilocal, and by the definition of the standard decomposition there is a nonzero prime ideal $P$ contained in the Jacobson radical $\Jac(T)$ of $T$. Then, $P=PT_P$; in particular, $pT_P\subseteq T$ for every $p\in P$, so that $T_P$ is a fractional ideal of $T$ and $(IT)T_P\neq K$. Since $T_P$ is a valuation domain, it follows that $IT$ is a fractional ideal of $T_P$, or equivalently $aIT\subseteq T_P$ for some $a\neq 0$. Hence, $apIT\subseteq T$ for any $p\in P$; choose one, and let $d_T:=ap$. Since $\supp(I)$ is finite, we can define $d$ as the product of such $d_T$; hence
\begin{equation*}
dI=d\bigcap_{T\in\Theta}IT=\bigcap_{T\in\Theta}dIT\subseteq\bigcap_{T\in\Theta}T=A.
\end{equation*}
Therefore, $I\in\insfracid(A)$, as claimed.

Suppose now that $\insfracid(A)\cap\insfracid(B)\neq\emptyset$ for some $A\neq B$ in $\SkOver(D)$. We can suppose that $A\subsetneq B$ (just substitute $A$ with $A\cap B$), and thus we can take $T\in\Theta$ containing $A$ but not $B$. Each overring of $A$ is flat over $D$, and $\supp(B)$ is finite; hence, by \cite[I.2.6, Proposition 6]{bourbaki_ac},
\begin{equation*}
BT=\left(\bigcap_{S\in\supp(B)}S\right)\cdot T=\bigcap_{S\in\supp(B)}ST=K.
\end{equation*}

Let now $I\in\insfracid(A)\cap\insfracid(B)$; then, for every $i\in I$, $i^{-1}I$ is a $B$-module containing $1$, and thus $B\subseteq i^{-1}I$. Since $i^{-1}I$ is also an $A$-fractional ideal, it means that $dB\subseteq A$ for some $d\neq 0$. Hence, $dB\subseteq T$, and so $dBT\subseteq TT=T$; however, $BT=K$, and thus we would have $dK\subseteq T$, a contradiction. Hence, the union is disjoint.
\end{proof}

\begin{oss}
~\begin{enumerate}
\item $\SkOver(D)$ is the unique subset of $\Over(D)$ which allows to split $\inssubmod(D)$ into sets of fractional ideals. Indeed, if $\inssubmod(D)=\bigsqcup\{\insfracid(A)\mid A\in\mathcal{A}\}$ for some other $\mathcal{A}$, then clearly $\mathcal{A}$ cannot properly contain $\SkOver(D)$, and thus there is a $B\in\SkOver(D)\setminus\mathcal{A}$. Thus, $B\in\insfracid(A)$ for some $A\in\mathcal{A}$, and $A\in\insfracid(B')$ for some $B'\in\SkOver(D)$; this means that $B\in\insfracid(B')$, which implies that $B=B'=A$. But, for any two overrings $R_1$ and $R_2$, $\insfracid(R_1)=\insfracid(R_2)$ implies $R_1=R_2$; hence $B\in\SkOver(D)$, a contradiction.
\item Proposition \ref{prop:skover} cannot be extended outside the semilocal case. For example, if $D=\insZ$, let $\mathbb{P}$ be the set of prime numbers, and define $I:=\sum_{p\in\mathbb{P}}\inv{p}\insZ$. Then, $\supp(I)=\{D_M\mid M\in\Max(D)\}$, so $A$ should be $\insZ$ itself; however, if $dI\subseteq D$ then $d$ should be divisible by every prime number, which cannot happen.
\end{enumerate}
\end{oss}

We want to use Proposition \ref{prop:skover} to decompose any semistar operation $\star$ into fractional star operations. We need another definition.
\begin{defin}
Let $D$ be a semilocal Pr\"ufer domain, and let $\SkOver(D)$ be the skeleton of $\Over(D)$. Let $\star\in\inssemistar(D)$. The \emph{support} of $\star$ is the set
\begin{equation*}
\supp(\star):=\{A\in\SkOver(D)\mid A^\star\in\insfracid(A)\}.
\end{equation*}
We denote the set of semistar operations on $D$ with support $\Delta$ as $\inssemisupp{\Delta}(D)$.
\end{defin}
Note that $\supp(\star)$ is always closed by intersections, since if $A^\star\in\insfracid(A)$ and $B^\star\in\insfracid(B)$ then $(A\cap B)^\star\subseteq A^\star\cap B^\star\in\insfracid(A\cap B)$. Moreover, the quotient field $K$ is always included in $\supp(\star)$.

An equivalent definition of $\supp(\star)$ is the set of elements $A$ of $\SkOver(D)$ such that $\star$ restricts to a fractional star operation on $A$. Hence, given any set $\Delta$ such that $\inssemisupp{\Delta}(D)\neq\emptyset$, we have a map
\begin{equation*}
\begin{aligned}
\gamma_\Delta:\inssemisupp{\Delta}(D) & \longrightarrow \prod\{\insfstar(A)\mid A\in\Delta\}\\
\star & \longmapsto (\star|_{\insfracid(A)})_{A\in\Delta}.
\end{aligned}
\end{equation*}

\begin{prop}\label{prop:gamma-inj}
Let $D$ be a semilocal Pr\"ufer domain, $\Delta\subseteq\SkOver(D)$, and let $\gamma_\Delta$ be defined as above. Then, $\gamma_\Delta$ is injective.
\end{prop}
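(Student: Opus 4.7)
The plan is to show that two operations $\star_1,\star_2\in\inssemisupp{\Delta}(D)$ with $\gamma_\Delta(\star_1)=\gamma_\Delta(\star_2)$ must agree on every $I\in\inssubmod(D)$. The case $I=0$ is immediate from property $(\mathbf{S})$ with $x=0$, which forces $0^\star=0$ for any semistar operation. For $I\neq 0$, Proposition \ref{prop:skover} gives a unique $B\in\SkOver(D)$ with $I\in\insfracid(B)$; when $B\in\Delta$, the equality $I^{\star_1}=I^{\star_2}$ holds by hypothesis, so the essential case is $B\notin\Delta$.

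The technical heart is the following lemma, which I would prove first: \emph{if $J\in\inssubmod(D)$ is nonzero and $\star$-closed, and $C\in\SkOver(D)$ is the unique element with $J\in\insfracid(C)$, then $C\in\supp(\star)$.} The proof runs in three stages. First, show the ring of multipliers $(J:J)$ is $\star$-closed: for each $j\in J$, property $(\mathbf{S})$ yields $j\cdot(J:J)^\star=((J:J)\,j)^\star\subseteq J^\star=J$, so $(J:J)^\star\cdot J\subseteq J$ and hence $(J:J)^\star\subseteq(J:J)$. Since $C\subseteq (J:J)$, order-preservation gives $C^\star\subseteq(J:J)$. Second, invoke the characterization $C=\bigcap\supp(J)$ from the proof of Proposition \ref{prop:skover}, together with the Jaffard-family identity $ST=K$ for distinct $S,T\in\Theta$, to verify $\supp(C)=\supp(J)$. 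Third, argue that $\supp(C^\star)=\supp(C)$: the inclusion $\subseteq$ follows from $C\subseteq C^\star$, while the reverse follows because for $T\in\supp(J)$ one has $(J:J)T\neq K$ (otherwise $J=J\cdot(J:J)$ would give $JT=J\cdot K=K$), whence $C^\star T\subseteq(J:J)T\neq K$. Uniqueness in Proposition \ref{prop:skover} then yields $C^\star\in\insfracid(C)$.

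Applying this lemma with $J=I^{\star_i}$ for $i=1,2$ produces $C_i\in\Delta$ with $I^{\star_i}\in\insfracid(C_i)$; comparing supports forces $C_i\supseteq B$. Because $I^{\star_i}$ is a $C_i$-module, $IC_i\subseteq I^{\star_i}$; combined with $I\subseteq IC_i$ this gives $I^{\star_i}=(IC_i)^{\star_i}$. Moreover, $B\subseteq C_i$ implies $xIC_i\subseteq C_i$ whenever $xI\subseteq B$, so $IC_i\in\insfracid(C_i)$ and $(IC_i)^{\star_i}$ is determined by the $C_i$-component of $\gamma_\Delta(\star_i)$. The hypothesis $\gamma_\Delta(\star_1)=\gamma_\Delta(\star_2)$ then yields
\[
I^{\star_1}=(IC_1)^{\star_1}=(IC_1)^{\star_2}\supseteq I^{\star_2},
\]
and the symmetric argument gives the reverse inclusion, so $I^{\star_1}=I^{\star_2}$.

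The main obstacle is the support-matching step in the key lemma. The difficulty is that $(J:J)$ need not itself lie in $\SkOver(D)$, so Proposition \ref{prop:skover} cannot be applied to it directly; instead, one must control $C^\star$ through the sandwich $C\subseteq C^\star\subseteq (J:J)$ and convert statements about $(J:J)T$ back to statements about $JT$ via $J=(J:J)\cdot J$.
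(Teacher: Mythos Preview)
Your proof is correct, but it takes a more elaborate route than the paper's. The paper does not isolate a general lemma about $\star$-closed modules; instead, for $I\in\insfracid(A)$ with $A\notin\Delta$, it simply takes $B$ to be the \emph{smallest} element of $\Delta$ containing $A$ (which exists because $K\in\Delta$ and $\Delta$ is closed under intersections) and asserts the chain $I^\star=(IA)^\star=(IA^\star)^\star=(IB)^\star$. The crucial point is that this $B$ depends only on $A$ and $\Delta$, not on the particular $\star$, so once $I^{\star_i}=(IB)^{\star_i}$ is established the conclusion $I^{\star_1}=(IB)^{\star_1}=(IB)^{\star_2}=I^{\star_2}$ is immediate, with no need for the two-sided inclusion via $C_1,C_2$.

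What each approach buys: the paper's argument is shorter and avoids the detour through $(J:J)$, but its step $(IA^\star)^\star=(IB)^\star$ is left implicit; unpacking it amounts to showing $B\subseteq A^\star$, which in turn needs that the unique $C\in\SkOver(D)$ with $A^\star\in\insfracid(C)$ lies in $\Delta$ (one sees this quickly: $1\in A^\star$ gives $C\subseteq A^\star$, hence $C^\star\subseteq A^\star\in\insfracid(C)$, so $C\in\Delta$; and $A\subseteq C$ by comparing supports). Your lemma is precisely the general form of this observation, so in effect you have made explicit the step the paper glosses over—at the cost of a longer endgame with the symmetric $C_1,C_2$ argument. If you want to streamline, you can keep your lemma but then, instead of working with $C_1$ and $C_2$ separately, note that the smallest $B\in\Delta$ above $A$ satisfies $B\subseteq C_i$ for both $i$, reducing directly to $(IB)^{\star_i}$.
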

\begin{proof}
Suppose $\gamma_\Delta(\star_1)=\gamma_\Delta(\star_2)=\gamma$, and let $I\in\inssubmod(D)$. By Proposition \ref{prop:skover}, $I\in\insfracid(A)$ for a unique $A\in\SkOver(D)$. If $A\in\Delta$, then $I^{\star_1}$ and $I^{\star_2}$ are equal to $I^{\gamma_A}$, where $\gamma_A$ is the component of $\gamma_\Delta$ with respect to $A$; hence $I^{\star_1}=I^{\star_2}$.

On the other hand, if $A\notin\Delta$, let $B$ be the smallest element of $\Delta$ containing $A$; it exists since $\Delta$ is closed by intersections. Then, $I^\star=(IA)^\star=(IA^\star)^\star=(IB)^\star$ for every $\star\in\inssemisupp{\Delta}(D)$; in particular, $I^\star=(IB)^{\gamma_\Delta(\star)_B}$. Since $\gamma_\Delta(\star_1)=\gamma_\Delta(\star_2)$, this again implies that $I^{\star_1}=I^{\star_2}$.

Therefore, $I^{\star_1}=I^{\star_2}$ in every case, and $\star_1=\star_2$.
\end{proof}

While $\gamma_\Delta$ is injective, it is usually very far from being surjective. For example, let $D$ be a one-dimensional Pr\"ufer domain with exactly two maximal ideals, $M$ and $N$; then, $\Theta=\{D_M,D_N\}$, and $\SkOver(D)=\{D,D_M,D_N,K\}=\Over(D)$. Suppose that $D_M$ is discrete while $D_N$ is not; then, by \cite[Chapter 31, Exercise 12]{gilmer} and \cite[Theorem 3.1]{twostar}, $\insfstar(D)=\insstar(D)$ is composed by two elements, the identity and the $v$-operation. Consider the element $(v_D,d_{D_M},d_{D_N},d_K)$ of $\insfstar(D)\times\insfstar(D_M)\times\insfstar(D_N)\times\insfstar(K)$, where $d_A$ indicates the identity on $A$ and $v_A$ the $v$-operation on $A$. Then, $N^{v_D}=D$, while $(ND_N)^{d_{D_N}}=ND_N$; in particular, $N^v\nsubseteq ND_N$, and thus $(v_D,d_{D_M},d_{D_N},d_K)$ cannot come from a semistar operation.

An inspection of this example shows that the problem lies in the fact that $v_D$ is ``not smaller'' than $d_{D_N}$; in terms of the $\gamma_\Delta$, we would like to impose the condition that $\gamma_\Delta(\star)|_A\leq\gamma_\Delta(\star)|_B$ whenever $A\subseteq B$. However, this condition doesn't really make sense as stated, since $\gamma_\Delta(\star)|_A$ and $\gamma_\Delta(\star)|_B$ live in different sets of closure operations. There are two possible approaches at this problems, both involving localizations of fractional star operations.

The first one uses localizations from one member of $\SkOver(D)$ to another. Indeed, if $A,B\in\SkOver(D)$ and $A\subseteq B$, then $B$ belongs to a Jaffard family of $A$ (explicitly, $\{B,T_1,\ldots,T_k\}$, where $T_1,\ldots,T_k$ are the elements of $\Theta$ that contain $A$ but not $B$). Hence, there is a localization map $\lambda_{A,B}:\insfstar(A)\longrightarrow\insfstar(B)$, and the condition becomes
\begin{equation*}
\lambda_{A,B}(\gamma_\Delta(\star)_A)\leq\gamma_\Delta(\star)_B.
\end{equation*}

The second approach, instead, uses localizations from $A$ to the members of the standard decomposition of $T$, and it is the one we will follow (mainly in view of the second part of Section \ref{sect:findim}).

Let $\Delta\subseteq\SkOver(D)$, and let $T\in\Theta$. The \emph{component of $\Delta$ with respect to $T$} is
\begin{equation*}
\Delta(T):=\{A\in\Delta\mid A\subseteq T\}.
\end{equation*}
Clearly, if $\Delta\neq\Lambda$ then there is a $T\in\Theta$ such that $\Delta(T)\neq\Lambda(T)$. A special case is $\Delta=\{K\}$: in this case, each $\Delta(T)$ is empty, and $\inssemisupp{\Delta}(D)=\{\wedge_{\{K\}}\}$.

Let now $A\in\Delta(T)$. Since $T$ belongs to a Jaffard family of $A$, there is a localization map $\lambda_{A,T}:\insfstar(A)\longrightarrow\insfstar(T)$. Therefore, for every $\star\in\inssemisupp{\Delta}(D)$ we get a map
\begin{equation*}
\begin{aligned}
\Gamma_T(\star)\colon\Delta(T) & \longrightarrow\insfstar(T) \\
A & \longmapsto \lambda_{A,T}(\star|_{\insfracid(A)}).
\end{aligned}
\end{equation*}

\begin{prop}\label{prop:GammaTstar}
Let $D,\Theta,\Delta$ as above; let $T\in\Theta$ and $\star\in\inssemisupp{\Delta}(D)$, and define $\Gamma_T(\star)$ as above. Then, $\Gamma_T(\star)$ is order-preserving.
\end{prop}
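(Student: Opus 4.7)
The plan is to fix $A,B\in\Delta(T)$ with $A\subseteq B$ and show $\lambda_{A,T}(\star|_{\insfracid(A)})\le\lambda_{B,T}(\star|_{\insfracid(B)})$. I would evaluate both sides on an arbitrary $J\in\insfracid(T)$, after presenting $J$ in two convenient ways that make the two localizations directly comparable.

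First I would note that $T$ lies in a Jaffard family of $A$ (by the construction recalled just before the statement, applied with $T$ playing the role of $B$ there), so every $J\in\insfracid(T)$ can be written as $J=IT$ with $I\in\insfracid(A)$. Since $B\subseteq T$ forces $BT=T$, this same $J$ also equals $(IB)T$, and $IB$ is a fractional ideal of $B$ (because $dI\subseteq A\subseteq B$ for some $d\neq 0$). Unwinding the definitions of the localization maps then gives
\[
J^{\lambda_{A,T}(\star|_{\insfracid(A)})}=I^\star T
\qquad\text{and}\qquad
J^{\lambda_{B,T}(\star|_{\insfracid(B)})}=(IB)^\star T.
\]
The inclusion $I\subseteq IB$ (since $1\in B$), together with the order-preserving property of $\star$, yields $I^\star\subseteq(IB)^\star$, and hence $I^\star T\subseteq(IB)^\star T$, which is exactly the desired inequality.

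I do not expect any real obstacle. The only step that requires a moment of care is checking that $\lambda_{B,T}$ is actually defined, which reduces to the same Jaffard-family observation used for $\lambda_{A,T}$, now applied to the pair $B\subseteq T$. Everything else is a direct definition chase, and the whole argument turns on the single trick of replacing the representative $I\in\insfracid(A)$ of $J$ by $IB\in\insfracid(B)$, which leaves $J=IT=(IB)T$ unchanged while making the $\star$-closure grow in the expected direction.
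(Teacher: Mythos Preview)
Your proposal is correct and follows essentially the same approach as the paper's proof: both fix $A\subseteq B$ in $\Delta(T)$, represent an ideal $J$ of $T$ as $IT$ with $I\in\insfracid(A)$, note that $J=(IB)T$ as well, and conclude from $I^\star\subseteq(IB)^\star$ that $I^\star T\subseteq(IB)^\star T$. The only cosmetic difference is that the paper produces the representative explicitly as $I:=J\cap A$ for an integral ideal $J$ of $T$, whereas you appeal to the Jaffard-family property to write $J=IT$; the comparison step is identical.
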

\begin{proof}
Let $A,B\in\Delta(T)$, $A\subseteq B$, and take any $\star\in\inssemisupp{\Delta}(D)$. Let $I$ be any integral ideal of $T$, and let $J:=I\cap A$; then, $JT=I$, and also $JBT=I$. Hence, by definition,
\begin{equation*}
I^{\lambda_{A,T}(\star|_{\insfracid(A)})}=J^\star A\subseteq (JB)^\star A=I^{\lambda_{B,T}(\star|_{\insfracid(B)})}.
\end{equation*}
Thus, $\lambda_{A,T}(\star|_{\insfracid(A)})\leq\lambda_{B,T}(\star|_{\insfracid(B)})$, as requested.
\end{proof}

If $Q_1$ and $Q_2$ are partially ordered sets, we denote by $\hom(Q_1,Q_2)$ the set of order-preserving maps between $Q_1$ and $Q_2$. This set is partially ordered; if $\phi,\psi\in\hom(Q_1,Q_2)$, then $\phi\leq\psi$ is $\phi(x)\leq\psi(x)$ for every $x\in Q_1$.

\begin{teor}\label{teor:sstar->hom}
Let $D$ be a semilocal Pr\"ufer domain with quotient field $K$, and let $\Theta$ be its standard decomposition; let $\Delta\neq\{K\}$ be a subset of $\SkOver(D)$ containing $K$ that is closed by intersections. The map
\begin{equation*}
\begin{aligned}
\Gamma_\Delta\colon\inssemisupp{\Delta}(D) & \longrightarrow\prod\{\hom(\Delta(T),\insfstar(T))\mid T\in\Theta,\Delta(T)\neq\emptyset\} \\
\star & \longmapsto (\Gamma_T(\star))_{T\in\Theta}
\end{aligned}
\end{equation*}
is an order isomorphism.
\end{teor}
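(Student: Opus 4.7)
The plan is to produce a two-sided inverse to $\Gamma_\Delta$ by applying the Jaffard-family isomorphism of Section \ref{sect:jaff} componentwise to each $A \in \Delta$. The key preliminary observation is that for every $A \in \SkOver(D)$ the set $\supp(A) := \{T \in \Theta : A \subseteq T\}$ is a Jaffard family of $A$, so the localization maps $\lambda_{A,T}$ assemble into an order-isomorphism $\insfstar(A) \cong \prod_{T \in \supp(A)} \insfstar(T)$. Injectivity of $\Gamma_\Delta$ is then immediate: if $\Gamma_\Delta(\star_1) = \Gamma_\Delta(\star_2)$, then for every $A \in \Delta$ the families $(\lambda_{A,T}(\star_i|_{\insfracid(A)}))_{T \in \supp(A)}$ coincide, hence $\star_1|_{\insfracid(A)} = \star_2|_{\insfracid(A)}$; this gives $\gamma_\Delta(\star_1) = \gamma_\Delta(\star_2)$, and Proposition \ref{prop:gamma-inj} forces $\star_1 = \star_2$.

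For surjectivity, given $\phi = (\phi_T)_T$ in the codomain, the idea is first to build, for each $A \in \Delta$, the fractional star operation $\star_A \in \insfstar(A)$ corresponding to $(\phi_T(A))_{T \in \supp(A)}$ via the Jaffard iso, and then to assemble the $\star_A$ into a single semistar operation. For $I \in \inssubmod(D)$, denote by $A(I) \in \SkOver(D)$ the unique element with $I \in \insfracid(A(I))$ (Proposition \ref{prop:skover}), and by $B(I)$ the smallest element of $\Delta$ containing $A(I)$ (well-defined since $K \in \Delta$ and $\Delta$ is closed under intersections); I then set
\[
I^\star := \bigl(I \cdot B(I)\bigr)^{\star_{B(I)}},
\]
which is meaningful because $A(I) \subseteq B(I)$ forces $I\cdot B(I) \in \insfracid(B(I))$. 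Extensivity, property $(\mathbf{S})$, and idempotence should be routine consequences of the corresponding properties of each $\star_{B(I)}$, using $A(xI) = A(I)$ and $A(I^\star) = B(I)$; the equality $\supp(\star) = \Delta$ follows from $A^\star = B(A)^{\star_{B(A)}} \in \insfracid(B(A))$, which lies in $\insfracid(A)$ if and only if $A = B(A)$, i.e., iff $A \in \Delta$.

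The hard part will be monotonicity. Given $I \subseteq J$, one verifies quickly that $\supp(J) \subseteq \supp(I)$, $A(I) \subseteq A(J)$, and $B(I) \subseteq B(J)$, so the inclusion $I^\star \subseteq J^\star$ reduces to the key compatibility
\[
X^{\star_A} \subseteq (XC)^{\star_C} \qquad \text{for } A \subseteq C \text{ in } \Delta \text{ and } X \in \insfracid(A).
\]
This is exactly where the hypothesis that each $\phi_T$ is order-preserving enters. Decomposing both sides via the Jaffard iso: for $T \in \supp(C) \subseteq \supp(A)$ one has $(X^{\star_A})T = (XT)^{\phi_T(A)}$ and $(XC)^{\star_C} T = (XT)^{\phi_T(C)}$, and since both $A$ and $C$ lie in $\Delta(T)$ with $A \subseteq C$, the inequality $\phi_T(A) \leq \phi_T(C)$ gives the required inclusion of $T$-components; for $T \in \supp(A) \setminus \supp(C)$ one has $CT = K$, hence $(XC)^{\star_C} T = K$ and the inclusion is vacuous. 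Intersecting, and using that the Jaffard intersection defining $(XC)^{\star_C}$ runs over the smaller set $\supp(C)$, yields the claim. Once this compatibility is established, $\Gamma_\Delta(\star) = \phi$ holds by construction, and both $\Gamma_\Delta$ and its inverse are order-preserving because each $\lambda_{A,T}$ is.
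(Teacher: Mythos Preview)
Your proposal is correct and follows essentially the same approach as the paper's proof: once one unpacks your $\star_A$ via the Jaffard isomorphism $\insfstar(A)\cong\prod_{T\supseteq A}\insfstar(T)$, your formula $I^\star=(I\cdot B(I))^{\star_{B(I)}}$ is literally the paper's intersection $\bigcap_{T\supseteq B}(IT)^{\varphi_T(B)}$, and your verifications of monotonicity, idempotence, and $\supp(\star)=\Delta$ mirror the paper's. The only difference is packaging---you invoke the Jaffard isomorphism of Section~\ref{sect:jaff} as a black box for each $A\in\Delta$, which makes injectivity and the fact that each $\star_A$ is a fractional star operation automatic, whereas the paper writes out the intersection formula directly and checks these by hand.
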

\begin{proof}
By Proposition \ref{prop:GammaTstar}, $\Gamma:=\Gamma_\Delta$ is well-defined and order-preserving. To show that it is an isomorphism, we define an inverse.

For every $T\in\Theta$ such that $\Delta(T)\neq\emptyset$, let $\varphi_T\in\hom(\Delta(T),\insfstar(T))$. Take an $I\in\inssubmod(D)$; by Proposition \ref{prop:skover}, there is an $A\in\SkOver(D)$ such that $I\in\insfracid(A)$, and there is a $B\in\Delta$ such that $A^\star\in\insfracid(B)$. Then, we define
\begin{equation*}
I^{\star}:=\bigcap_{\substack{T\in\Theta\\ \Delta(T)\neq\emptyset}}(IBT)^{\varphi_T(B)}=\bigcap_{\substack{T\in\Theta\\ T\supseteq B}}(IT)^{\varphi_T(B)}.
\end{equation*}
We first claim that the map $\star$ so defined is a semistar operation.

Clearly, $\star$ is extensive and $(xI)^\star=x\cdot I^\star$ for every $x$ and every $I$ (since $I\in\insfracid(A)$ implies $xI\in\insfracid(A)$). To see that it is order-preserving, let $I,J\in\inssubmod(D)$, $I\subseteq J$. If $I,J\in\insfracid(A)$ for some $A\in\SkOver(D)$ the claim is trivial. If $I\in\insfracid(A)$ and $J\in\insfracid(A')$, then $A\subseteq A'$; if $A^\star\in\insfracid(B)$ and $A'^\star\in\insfracid(B')$, then also $B\subseteq B'$, and thus $IBT\subseteq JB'T$. Since $\varphi_T$ is order-preserving, we have $(IBT)^{\varphi_T(B)}\subseteq(JB'T)^{\varphi_T(B')}$; since this happens for all $T$, we have $I^\star\subseteq J^\star$, and $\star$ is order-preserving.

We need to show that $\star$ is idempotent. We note that, if $T\supseteq B$, then $IT\neq K$; therefore, by the proof of Proposition \ref{prop:skover}, $I^\star$ is a fractional ideal over $B$. Thus,
\begin{equation*}
(I^\star)^\star=\bigcap_{\substack{T\in\Theta\\ T\supseteq B}}\left[\left(\bigcap_{\substack{U\in\Theta\\ U\supseteq B}}(IU)^{\varphi_U(B)}\right)\cdot T\right]^{\varphi_T(B)}=\bigcap_{\substack{T\in\Theta\\ T\supseteq B}}\left[\bigcap_{\substack{U\in\Theta\\ U\supseteq B}}(IU)^{\varphi_U(B)}T\right]^{\varphi_T(B)},
\end{equation*}
with the last equality holding since the innermost intersection is finite and each $T\in\Theta$ is flat. Each $(IU)^{\varphi_U(B)}$ is a $U$-module; thus, if $U\neq T$, then $(IU)^{\varphi_U(B)}T=K$. Hence, the calculation above reduces to
\begin{equation*}
\bigcap_{\substack{T\in\Theta\\ T\supseteq B}}\left[(IT)^{\varphi_T(B)}\right]^{\varphi_T(B)}=\bigcap_{\substack{T\in\Theta\\ T\supseteq B}}(IT)^{\varphi_T(B)}=I^\star
\end{equation*}
since each $\varphi_T(B)$ is idempotent. Hence, $\star$ is idempotent, and thus a semistar operation. Also, a direct computation shows that the support of $\star$ is exactly $\Delta$.

Therefore, we have a map
\begin{equation*}
\begin{aligned}
\Phi:=\Phi_\Delta\colon\prod\{\hom(\Delta(T),\insfstar(T))\mid T\in\Theta,\Delta(T)\neq\emptyset\} & \longrightarrow \inssemisupp{\Delta}(D)
\end{aligned}
\end{equation*}
sending $(\varphi_T)_{T\in\Theta}$ to the map $\star$ defined as above.

We need to show that $\Phi\circ\Gamma$ and $\Gamma\circ\Phi$ are the identity (on $\inssemisupp{\Delta}(D)$ and the product, respectively).

Let $\star\in\inssemisupp{\Delta}(D)$. Then, if $I\in\insfracid(A)$ and $A^\star\in\insfracid(B)$, the map $\Phi\circ\Gamma(\star)$ sends $I$ to
\begin{equation*}
\bigcap_{\substack{T\in\Theta\\ T\supseteq B}}(IT)^{\Gamma_T(\star)(B)}= \bigcap_{\substack{T\in\Theta\\ T\supseteq B}}(IT)^{\lambda_{B,T}(\star|_{\insfracid(B)})}= \bigcap_{\substack{T\in\Theta\\ T\supseteq B}}(IB)^\star T=(IB)^\star=I^\star,
\end{equation*}
with the second to last equality coming from the fact that $\{T\in\Theta\mid T\supseteq B\}$ is a Jaffard family on $B$; hence, $\Phi\circ\Gamma(\star)=\star$.

On the other hand, let $\boldsymbol{\varphi}=(\varphi_T)_{T\in\Theta}$ be an element of the product, and fix a $U\in\Theta$. The component with respect to $U$ of $\Gamma\circ\Phi(\boldsymbol{\varphi})$ sends a $B\in\Delta(U)$ to $\lambda_{B,U}(\Phi(\boldsymbol{\varphi})|_{\insfracid(B)})$. Let $I=JU$ be a fractional ideal of $U$, where $J$ is a fractional ideal of $D$; by definition, this map sends $I$ to
\begin{equation*}
J^{\Phi(\boldsymbol{\varphi})}U=\left[\bigcap_{\substack{T\in\Theta\\ T\supseteq B}}(JT)^{\varphi_T(B)}\right]U=\bigcap_{\substack{T\in\Theta\\ T\supseteq B}}(JT)^{\varphi_T(B)}U=(JU)^{\varphi_U(B)},
\end{equation*}
again by flatness, the finiteness of the intersection and the equality $TU=K$ for $T\neq U$. Hence, $\Gamma\circ\Phi(\boldsymbol{\varphi})$ acts on $\insfracid(B)$ as $\boldsymbol{\varphi}$. Since this happens for each $B$, we have $\Gamma\circ\Phi(\boldsymbol{\varphi})=\boldsymbol{\varphi}$.

Therefore, $\Gamma_\Delta$ and $\Phi_\Delta$ are inverses one of each other, and the theorem is proved.
\end{proof}

\begin{cor}
Let $D$ be a semilocal Pr\"ufer domain with quotient field $K$, and let $\Delta\subseteq\SkOver(D)$. Then, $\Delta=\supp(\star)$ for some $\star\in\inssemistar(D)$ if and only if $K\in\Delta$ and $\Delta$ is closed by intersections.
\end{cor}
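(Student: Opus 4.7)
The forward implication is essentially already recorded in the paper: immediately after the definition of $\supp(\star)$ it is observed that $K\in\supp(\star)$ always (since $K\in\insfracid(K)$) and that $\supp(\star)$ is closed under intersections (using that $(A\cap B)^\star\subseteq A^\star\cap B^\star$ together with $\insfracid(A)\cap\insfracid(B)\subseteq\insfracid(A\cap B)$). So in the forward direction there is nothing new to do beyond citing those two remarks.

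For the converse, given a $\Delta\subseteq\SkOver(D)$ that contains $K$ and is closed under intersections, the plan is simply to exhibit one semistar operation with support exactly $\Delta$. I would split into two cases. If $\Delta=\{K\}$, the paper has already remarked, in Section \ref{sect:Omef-irred}'s analogue just before Theorem \ref{teor:sstar->hom}, that $\inssemisupp{\{K\}}(D)=\{\wedge_{\{K\}}\}$, so $\wedge_{\{K\}}$ works; one only needs to check that its support really is $\{K\}$, which is clear since $A^{\wedge_{\{K\}}}=K$ and $K\in\insfracid(A)$ forces $A=K$.

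If instead $\Delta\neq\{K\}$, I invoke Theorem \ref{teor:sstar->hom}: the hypotheses on $\Delta$ are exactly what that theorem requires, so
\begin{equation*}
\inssemisupp{\Delta}(D)\;\simeq\;\prod\{\hom(\Delta(T),\insfstar(T))\mid T\in\Theta,\ \Delta(T)\neq\emptyset\}.
\end{equation*}
It therefore suffices to show the right-hand side is non-empty, and for that each factor $\hom(\Delta(T),\insfstar(T))$ is non-empty because the constant map sending every $A\in\Delta(T)$ to the identity $d_T\in\insfstar(T)$ is trivially order-preserving. Applying $\Phi_\Delta$ to any element of the product produces a semistar operation whose support is exactly $\Delta$; indeed, the fact that $\Phi_\Delta(\boldsymbol{\varphi})$ has support precisely $\Delta$ was observed as a by-product inside the proof of Theorem \ref{teor:sstar->hom} (the step labelled ``a direct computation shows that the support of $\star$ is exactly $\Delta$'').

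There is no real obstacle here: the corollary is essentially a packaging of Theorem \ref{teor:sstar->hom} together with the two remarks about $\supp(\star)$ made right after its definition. The only genuine verification is the support computation, which has already been carried out inside the proof of that theorem, so this proof amounts to assembling facts that are all in place.
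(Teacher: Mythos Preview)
Your proposal is correct and follows essentially the same route as the paper's own proof: necessity is dispatched by the two remarks immediately following the definition of $\supp(\star)$, and sufficiency splits into the trivial case $\Delta=\{K\}$ (handled by $\wedge_{\{K\}}$) and the general case, where Theorem~\ref{teor:sstar->hom} identifies $\inssemisupp{\Delta}(D)$ with a product of nonempty sets. The only difference is that you spell out explicitly why each factor $\hom(\Delta(T),\insfstar(T))$ is nonempty (via the constant map to $d_T$), whereas the paper simply asserts that the factors are nonempty; your cross-reference to Section~\ref{sect:Omef-irred} is misplaced, though, since the remark about $\inssemisupp{\{K\}}(D)=\{\wedge_{\{K\}}\}$ appears in the discussion of components $\Delta(T)$ just before Theorem~\ref{teor:sstar->hom}, not in the section on homeomorphically irreducible trees.
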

\begin{proof}
The conditions are clearly necessary. If $\Delta=\{K\}$, then $\Delta=\supp\{\wedge_{\{K\}}\}$; if $\Delta\neq\{K\}$, by the previous theorem $\inssemisupp{\Delta}(D)$ is isomorphic to a product of nonempty sets, and thus it is nonempty.
\end{proof}

By definition, $\inssemistar(D)$ is the disjoint union of $\inssemisupp{\Delta}(D)$, as $\Delta$ ranges among the subsets of $\SkOver(D)$; or, equivalently, among those subsets that are closed by intersections. Therefore, in light of Theorem \ref{teor:sstar->hom}, we can view $\inssemistar(D)$ as the union of products of sets of order-preserving maps. To fully reconstruct the set of semistar operations from this union, we need also to consider the order structure.
\begin{prop}\label{prop:ordine-hom}
Let $D$ be a semilocal Pr\"ufer domain, let $\Theta$ be its standard decomposition, and let $\star_1,\star_2\in\inssemistar(D)$. Then, $\star_1\leq\star_2$ if and only if
\begin{enumerate}
\item $\supp(\star_1)\supseteq\supp(\star_2)$; and
\item for any $A\in\supp(\star_2)$ and every $T\in\Theta$ such that $T\supseteq A$, we have $\Gamma_T(\star_1)(A)\leq\Gamma_T(\star_2)(A)$.
\end{enumerate}
\end{prop}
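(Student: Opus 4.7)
The proof splits into the two implications, with the forward direction essentially immediate and the backward direction resting on the explicit reconstruction formula from the proof of Theorem \ref{teor:sstar->hom}.

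Suppose first $\star_1\leq\star_2$. For (1), let $A\in\supp(\star_2)$ and pick $x\neq 0$ with $xA^{\star_2}\subseteq A$: then $xA^{\star_1}\subseteq xA^{\star_2}\subseteq A$, while property $(\mathbf{S})$ together with order-preservation gives $aA^{\star_1}=(aA)^{\star_1}\subseteq A^{\star_1}$ for every $a\in A$, so $A^{\star_1}\in\insfracid(A)$ and $A\in\supp(\star_1)$. For (2), fix $A\in\supp(\star_2)\subseteq\supp(\star_1)$ and $T\in\Theta$ with $A\subseteq T$. Every $J\in\insfracid(T)$ can be written as $J=IT$ with $I\in\insfracid(A)$, and the definition of $\lambda_{A,T}$ gives $J^{\Gamma_T(\star_i)(A)}=I^{\star_i}T$; the assumption $I^{\star_1}\subseteq I^{\star_2}$ then yields $\Gamma_T(\star_1)(A)\leq\Gamma_T(\star_2)(A)$.

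Conversely, assume (1) and (2) and fix $I\in\inssubmod(D)$. By Proposition \ref{prop:skover}, $I\in\insfracid(A_0)$ for a unique $A_0\in\SkOver(D)$, and for $i=1,2$ let $B_i\in\supp(\star_i)$ be the unique element with $A_0^{\star_i}\in\insfracid(B_i)$. By the reconstruction formula from the proof of Theorem \ref{teor:sstar->hom},
\[
I^{\star_i}=\bigcap_{\substack{T\in\Theta\\ T\supseteq B_i}}(IT)^{\Gamma_T(\star_i)(B_i)}.
\]
A short support computation shows that $B_1$ is the smallest element of $\supp(\star_1)$ containing $A_0$: for any $C\in\supp(\star_1)$ with $A_0\subseteq C$, the identity $\supp(C^{\star_1})=\{T\in\Theta:T\supseteq C\}$ (valid since $C^{\star_1}\in\insfracid(C)$), combined with the inclusion $A_0^{\star_1}\subseteq C^{\star_1}$, gives $\supp(C^{\star_1})\subseteq\supp(A_0^{\star_1})$, whence $B_1\subseteq C$ upon intersecting. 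Applying this with $C=B_2$, which by (1) lies in $\supp(\star_1)$ and satisfies $A_0\subseteq B_2$, gives $B_1\subseteq B_2$. Hence $\{T:T\supseteq B_2\}\subseteq\{T:T\supseteq B_1\}$, and restricting the $\star_1$-intersection to this smaller index set yields $I^{\star_1}\subseteq\bigcap_{T\supseteq B_2}(IT)^{\Gamma_T(\star_1)(B_1)}$. For each such $T$, Proposition \ref{prop:GammaTstar} applied to $B_1\subseteq B_2$ and condition (2) at $A=B_2$ give
\[
\Gamma_T(\star_1)(B_1)\leq\Gamma_T(\star_1)(B_2)\leq\Gamma_T(\star_2)(B_2),
\]
so $(IT)^{\Gamma_T(\star_1)(B_1)}\subseteq(IT)^{\Gamma_T(\star_2)(B_2)}$; intersecting over $T\supseteq B_2$ yields $I^{\star_1}\subseteq I^{\star_2}$.

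The main difficulty in the backward direction is that the two reconstruction formulas are indexed by different targets $B_1$ and $B_2$, so a term-by-term comparison is unavailable. The bridge is the two-step bound displayed above, which combines the order-preservation of $\Gamma_T(\star_1)$ on $\supp(\star_1)(T)$ with condition (2) at the common target $B_2\in\supp(\star_2)$; this hinges on first establishing $B_1\subseteq B_2$ via the support argument, which is the only point where the equivalence between the $\insfracid$-characterization of $B_i$ and its description as the least element of $\supp(\star_i)$ above $A_0$ is used.
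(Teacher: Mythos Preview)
Your proof is correct and follows essentially the same route as the paper's. The forward direction is identical. For the converse, the paper first reduces to the case where $I\in\insfracid(B)$ with $B\in\supp(\star_2)$ (by replacing $I$ with $IA_2$, so that afterwards $B_1=B_2=B$ and condition~(2) applies directly at $B$), whereas you keep the general $I$ and bridge the mismatch between $B_1$ and $B_2$ via the two-step chain $\Gamma_T(\star_1)(B_1)\leq\Gamma_T(\star_1)(B_2)\leq\Gamma_T(\star_2)(B_2)$; both arguments rest on the same ingredients (the reconstruction formula and the inclusion $B_1\subseteq B_2$), and yours in fact justifies that inclusion more carefully than the paper, which simply asserts it. One small point: you should dispose separately of the trivial case $\supp(\star_2)=\{K\}$, since Theorem~\ref{teor:sstar->hom} is stated only for $\Delta\neq\{K\}$ and the reconstruction formula for $\star_2$ is then an empty intersection.
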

\begin{proof}
Suppose first that $\star_1\leq\star_2$. If $A\in\supp(\star_2)$, then $A^{\star_1}\subseteq A^{\star_2}$, and thus $A^{\star_1}$ is a fractional ideal of $A$; hence, $A\in\supp(\star_1)$ and $\supp(\star_1)\supseteq\supp(\star_2)$. Moreover, $\star_1|_{\insfracid(A)}\leq\star_2|_{\insfracid(A)}$; since the localization to $T$ preserves the order, $\Gamma_T(\star_1)\leq\Gamma_T(\star_2)$.

Conversely, suppose that the two conditions hold. If $\supp(\star_2)=\{K\}$, then $\star_2=\wedge_\{K\}$ and the claim holds; suppose $\supp(\star_2)\neq\{K\}$, so that in particular $\supp(\star_2)(T)\neq\emptyset$ for some $T\in\Theta$. Let $I$ be a $D$-submodule of the quotient field $K$; then, $I\in\insfracid(B)$ for some $B\in\SkOver(D)$. Let $A_i$ be the element of $\SkOver(D)$ such that $B^{\star_i}$ is a fractional ideal over $A_i$; since $\supp(\star_1)\supseteq\supp(\star_2)$, we have $A_1\subseteq A_2$. Then, 
\begin{equation*}
I^{\star_1}=(IA_1)^{\star_1}\subseteq (IA_2)^{\star_1}
\end{equation*}
and $(IA_2)^{\star_2}=I^{\star_2}$, so we need only to show that $(IA_2)^{\star_1}\subseteq(IA_2)^{\star_2}$; equivalently, we can suppose that $A_2=B\in\supp(\star_2)$.

Since, by the proof of Theorem \ref{teor:sstar->hom}, the inverse of $\Gamma$ is $\Phi$, we have
\begin{equation*}
I^{\star_1}=\bigcap_{\substack{T\in\Theta\\ T\supseteq B}}(IT)^{\Gamma_T(\star_1)(B)}\subseteq\bigcap_{\substack{T\in\Theta\\ T\supseteq B}}(IT)^{\Gamma_T(\star_2)(B)}=I^{\star_2},
\end{equation*}
since by hypothesis $\Gamma_T(\star_1)(A)\leq\Gamma_T(\star_2)(A)$ for every $T$. Hence, $\star_1\leq\star_2$, as requested.
\end{proof}

\section{Pr\"ufer domains with the same semistar operations}
Theorem \ref{teor:sstar->hom} and Proposition \ref{prop:ordine-hom}, taken together, show that the structure of $\inssemistar(D)$ (both as a set and as a partially-ordered set) depends exclusively from the sets $\hom(\Delta(T),\insfstar(T))$; or rather, exclusively from the $\insfstar(T)$.

More precisely, let $D_1$ and $D_2$ be two semilocal Pr\"ufer domains, and let $\Theta_1$ and $\Theta_2$ be their standard decompositions. As it was observed after Definition \ref{def:skover}, if $\Theta_1$ and $\Theta_2$ have the same cardinality then the structure of $\SkOver(D_1)$ and $\SkOver(D_2)$ is the same; that is, there is an order isomorphism $\nu:\SkOver(D_1)\longrightarrow\SkOver(D_2)$. Moreover, a subset $\Delta\subseteq\SkOver(D_1)$ is closed by intersections if and only if so is $\nu(\Delta)$, since the intersection of the elements of $\Delta$ is exactly its infimum in the natural order of $\SkOver(D_1)$ (that is, the inclusion). In particular, the subsets of $D_1$ that can be a support of a $\star\in\inssemistar(D_1)$ correspond bijectively to the subsets of $D_2$ that can support a semistar operation on $D_2$. Besides, $\nu$ restricts to a bijection (which, for simplicity, we still call $\nu$) between $\Delta(T)$ and $\nu(\Delta)(\nu(T))$.

Suppose now that, besides $\nu$, we have an order-preserving map $\nu_T:\insfstar(T)\longrightarrow\insfstar(\nu(T))$, for some $T\in\Theta_1$. Then, for every $\Delta\subseteq\SkOver(D_1)$ (not containing only the quotient field $K_1$) closed by intersections, we have a map
\begin{equation*}
\begin{aligned}
\widehat{\nu_T}\colon\hom(\Delta(T),\insfstar(T)) & \longrightarrow \hom(\nu(\Delta)(\nu(T)),\insfstar(\nu(T)))\\
\psi & \longmapsto \nu_T\circ\psi\circ\nu^{-1},
\end{aligned}
\end{equation*}
which is bijective as soon as $\nu_T$ is bijective. Hence, if we are given a bijection $\nu_T$ for every $T\in\Theta$, for every $\Delta$ we can build a map
\begin{equation*}
\begin{aligned}
\boldnu\colon\prod_{\substack{T\in\Theta_1\\ \Delta(T)\neq\emptyset}}\hom(\Delta(T),\insfstar(T)) & \longrightarrow \prod_{\substack{U\in\Theta_2\\ \Delta(U)\neq\emptyset}}\hom(\nu(\Delta)(U),\insfstar(U))\\
(\varphi_T) & \longmapsto (\widehat{\nu_T}(\varphi_T)).
\end{aligned}
\end{equation*}
By composing $\boldnu$ with the bijections $\Gamma_\Delta$ and $\Gamma_{\nu(\Delta)}$, we therefore obtain a bijective and order-preserving map $\inssemisupp{\Delta}(D_1)\longrightarrow\inssemisupp{\nu(\Delta)}(D_2)$. Since also $\inssemisupp{\{K_1\}}(D_1)=\{\wedge_{\{K_1\}}\}$ is isomorphic to $\inssemisupp{\{K_2\}}(D_2)=\{\wedge_{\{K_2\}}\}$, we can join all the supports to obtain a bijection $\inssemistar(D_1)\longrightarrow\inssemistar(D_2)$, which (by Proposition \ref{prop:ordine-hom}) respects the order. We have proved the following.
\begin{prop}\label{prop:nuT}
Let $D_1$ and $D_2$ be two semilocal Pr\"ufer domains, and let $\Theta_1$ and $\Theta_2$ be their standard decompositions. If there is a bijection $\nu:\Theta_1\longrightarrow\Theta_2$ and, for every $T\in\Theta_1$, an order isomorphism $\nu_T:\insfstar(T)\longrightarrow\insfstar(\nu(T))$, then $\inssemistar(D_1)$ and $\inssemistar(D_2)$ are order isomorphic.
\end{prop}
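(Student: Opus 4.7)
The plan is to build the order isomorphism fiber-by-fiber over the possible supports and then verify that the pieces glue monotonically. Since $|\Theta_1|=|\Theta_2|$, the bijection $\nu$ extends uniquely to an order isomorphism $\widetilde{\nu}:\SkOver(D_1)\longrightarrow\SkOver(D_2)$, mapping $\bigcap_{T\in\mathcal{S}}T$ to $\bigcap_{T\in\mathcal{S}}\nu(T)$; this extension preserves intersections and hence restricts to a bijection between the intersection-closed subsets containing the quotient field on each side. By the corollary just proved, these are precisely the subsets that arise as supports, so I obtain a bijection $\Delta\mapsto\widetilde{\nu}(\Delta)$ between the possible supports on the two sides; moreover $\widetilde{\nu}$ carries $\Delta(T)$ bijectively onto $\widetilde{\nu}(\Delta)(\nu(T))$.

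Fix now a nontrivial support $\Delta\neq\{K_1\}$. Theorem \ref{teor:sstar->hom} identifies $\inssemisupp{\Delta}(D_1)$ with $\prod_T\hom(\Delta(T),\insfstar(T))$ and analogously identifies $\inssemisupp{\widetilde{\nu}(\Delta)}(D_2)$. The given order isomorphisms $\nu_T$ induce bijections $\widehat{\nu_T}:\hom(\Delta(T),\insfstar(T))\longrightarrow\hom(\widetilde{\nu}(\Delta)(\nu(T)),\insfstar(\nu(T)))$ by $\psi\mapsto\nu_T\circ\psi\circ\widetilde{\nu}^{-1}$, which are order isomorphisms because $\nu_T$ is. Taking the product and conjugating by the two $\Gamma$-maps produces an order isomorphism between the two $\Delta$-fibers. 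The trivial support is handled by hand: $\inssemisupp{\{K_i\}}(D_i)=\{\wedge_{\{K_i\}}\}$, so I pair the two singletons. Assembling these fiber isomorphisms over all supports yields a bijection $\Phi:\inssemistar(D_1)\longrightarrow\inssemistar(D_2)$.

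What remains is order preservation across different fibers, which is the one genuinely new step. Proposition \ref{prop:ordine-hom} reduces $\star_1\leq\star_2$ to two conditions: $\supp(\star_1)\supseteq\supp(\star_2)$, and $\Gamma_T(\star_1)(A)\leq\Gamma_T(\star_2)(A)$ for every $A\in\supp(\star_2)$ and every $T\in\Theta_1$ with $T\supseteq A$. The first condition transports through $\widetilde{\nu}$ because $\widetilde{\nu}$ is an order isomorphism of skeletons; the second transports through the $\nu_T$ because each is order-preserving on $\insfstar(T)$ and the $A$ at which $\Gamma_T$ is evaluated lies in $\supp(\star_2)\subseteq\supp(\star_1)$, so both $\Gamma_T(\star_i)(A)$ make sense. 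Applying the same argument to $\Phi^{-1}$ proves that $\Phi$ is an order isomorphism. The only potential obstacle is bookkeeping: the same $A$ can contribute to the $\Gamma_T$-data attached to different supports, and one must confirm that the conjugation by $\widetilde{\nu}$ in $\widehat{\nu_T}$ is compatible with this evaluation. However, Proposition \ref{prop:ordine-hom} already packages precisely this information, so once the fiber machinery is in place the verification is routine.
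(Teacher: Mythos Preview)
Your proof is correct and follows essentially the same route as the paper: extend $\nu$ to an order isomorphism of the skeletons, transport each support $\Delta$ to $\widetilde{\nu}(\Delta)$, use $\widehat{\nu_T}$ together with the $\Gamma_\Delta$ maps of Theorem~\ref{teor:sstar->hom} to get fiberwise isomorphisms, and then invoke Proposition~\ref{prop:ordine-hom} to glue across fibers. If anything, you spell out the cross-fiber monotonicity more explicitly than the paper does, which simply states that the joined bijection ``respects the order'' by Proposition~\ref{prop:ordine-hom}.
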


Obviously, the problem with this result is that it is difficult to check the hypothesis that $\insfstar(T)$ and $\insfstar(\nu(T))$ are isomorphic; in particular, if the standard decomposition of $D_1$ is exactly $\{D_1\}$ (and so $\Theta_2=\{D_2\}$) the theorem is essentially a vacuous statement. To get a better version, we need to consider the structure of the spectrum.

Let $D$ be a Pr\"ufer domain. It is well-known that its spectrum $\Spec(D)$ is a rooted tree, with root $(0)$; in particular, we can construct the underlying homeomorphically irreducible tree associated to $\Spec(D)$ (see Section \ref{sect:Omef-irred}), which we denote by $\hiSpec(D)$. In particular, $(0)$ and the maximal ideals of $D$ belong to $\hiSpec(D)$.

If $D$ is semilocal, then $\hiSpec(D)$ is finite: indeed, if $V(P)\cap\Max(D)=V(Q)\cap\Max(D)$, then at least one between $P$ and $Q$ is not in $\hiSpec(D)$. Therefore, for any $P\in\hiSpec(D)$, $P\neq(0)$, there is a $Q\in\hiSpec(D)$ such that $Q\subsetneq P$ and no element of $\hiSpec(D)$ lies between $Q$ and $P$; i.e., $Q$ is directly below $P$ in $\hiSpec(D)$. We denote by $Z(P)$ the ring $D_P/QD_P\simeq(D/Q)_{P/Q}$; when $P=(0)$, we set $Z(P)$ as the quotient field of $D$. Clearly, $Z(P)$ is a valuation domain.

\begin{prop}\label{prop:nuSkOver}
Let $D_1,D_2$ be semilocal Pr\"ufer domains, and let $\Theta_1,\Theta_2$ be, respectively, the standard decompositions of $D_1$ and $D_2$. Suppose there is an order isomorphism $\nu:\hiSpec(D_1)\longrightarrow\hiSpec(D_2)$. Then, there is an order isomorphism $\overline{\nu}:\SkOver(D_1)\longrightarrow\SkOver(D_2)$ such that:
\begin{enumerate}
\item $\overline{\nu}$ restricts to a bijection from $\Theta_1$ to $\Theta_2$;
\item for every $P\in\hiSpec(D_1)$ and every $T\in\Theta_1$, $PT=T$ if and only if $\nu(P)\overline{\nu}(T)=\overline{\nu}(T)$.
\end{enumerate}
\end{prop}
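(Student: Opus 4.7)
The plan is to canonically identify the standard decomposition $\Theta$ with the set of children of the root $(0)$ in $\hiSpec(D)$; once this is in place, $\nu$ immediately restricts to a bijection $\Theta_1\to\Theta_2$, and the rest of $\overline{\nu}$ is forced by the Boolean-lattice structure on $\SkOver(D)$ noted after Definition~\ref{def:skover}.

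To build the bijection $\eta\colon\Theta\to\{\text{children of }(0)\text{ in }\hiSpec(D)\}$, I would associate to the dependence class $\Delta_\lambda$ underlying $T_\lambda\in\Theta$ a prime $P_\lambda$ defined as follows: if $\Delta_\lambda=\{M\}$, set $P_\lambda:=M$; otherwise let $P_\lambda$ be the union of the chain of primes of $D$ contained in every $M\in\Delta_\lambda$, which is nonzero by pairwise dependence and finiteness of $\Delta_\lambda$, is a prime, and has the pairwise incomparable family $\Delta_\lambda$ as its infimum. In either case $P_\lambda\in\hiSpec(D)$, and no prime strictly between $(0)$ and $P_\lambda$ is a branching point, since all such primes lie below only the maximals of $\Delta_\lambda$ and have a single chain above them up to $P_\lambda$. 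Conversely, for a child $P$ of $(0)$ in $\hiSpec(D)$, the set of maximals containing $P$ is exactly a dependence class (they share $P$, and the minimality of $P$ as a branching point prevents extra dependencies created by primes strictly below $P$), which recovers $\eta$'s inverse.

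Since $\nu$ preserves the unique minimum of its domain, it restricts to a bijection between the children of $(0)_1$ and $(0)_2$; composing with the $\eta_i$ gives $\tilde\nu\colon\Theta_1\to\Theta_2$. By the remark after Definition~\ref{def:skover}, the map $S\mapsto\bigcap_{T\in S}T$ is an order anti-isomorphism $(2^{\Theta_i},\subseteq)\to(\SkOver(D_i),\subseteq)$, so
\begin{equation*}
\overline{\nu}\Bigl(\bigcap_{T\in S}T\Bigr):=\bigcap_{T\in S}\tilde\nu(T)\qquad(S\subseteq\Theta_1)
\end{equation*}
is a well-defined order isomorphism $\SkOver(D_1)\to\SkOver(D_2)$ restricting to $\tilde\nu$ on $\Theta_1$ (take $S=\{T\}$), giving (1).

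For (2), fix $T\in\Theta_1$ with class $\Delta_\lambda$ and $P\in\hiSpec(D_1)$; by the analysis above, $PT\neq T$ iff $P\subseteq M$ for some $M\in\Delta_\lambda$, iff $P=(0)_1$ or $P\geq\eta_1(T)$ in the tree order of $\hiSpec(D_1)$. Since $\nu$ preserves both the root and the order, and $\eta_2(\overline{\nu}(T))=\nu(\eta_1(T))$ by construction, the same characterization transfers to the image and yields $PT=T$ iff $\nu(P)\overline{\nu}(T)=\overline{\nu}(T)$. The main obstacle is the first step: the tree-combinatorial identification of $\Theta$ with the children of the root, which demands checking both that each dependence class contributes a unique child and that no additional branching points sneak into the segment between $(0)$ and $\eta(T_\lambda)$; once $\eta$ is set up, the remainder is bookkeeping.
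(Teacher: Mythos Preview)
Your proof is correct and follows the same strategy as the paper's: identify $\Theta_i$ with the minimal elements of $\hiSpec(D_i)\setminus\{(0)\}$ (equivalently, the children of the root), transport this identification along $\nu$, and extend to all of $\SkOver$ via intersections, after which part~(2) is bookkeeping. You supply considerably more detail than the paper, which cites \cite{starloc} for the correspondence between $\Theta$ and the dependence classes and declares (2) ``a direct consequence of the construction''; note one phrasing slip in your write-up: $P_\lambda$ is the infimum of the pairwise incomparable family $\Delta_\lambda$, not the other way around.
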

\begin{proof}
Let $D$ be a Pr\"ufer domain. By \cite[Proposition 6.2]{starloc}, the elements of $\Theta$ are in bijective correspondence with the equivalence classes of the dependence relation on $\Max(D)$. Moreover, if $D$ is semilocal, for every equivalence class $\Delta$, there is a $P\in\Spec(D)$ such that $T=\bigcap\{D_M\mid P\subseteq M\}$; in particular, if $P$ is maximal with respect to this property, $P\in\hiSpec(D)$ and, in fact, $P$ is a minimal element of $\hiSpec(D)\setminus\{(0)\}$.

Thus, coming back to the notation of the statement, the map
\begin{equation*}
\begin{aligned}
\overline{\nu}_0\colon\Theta_1 & \longrightarrow\Theta_2 \\
\bigcap_{\substack{M\in\Max(D_1)\\ P\subseteq M}}(D_1)_M & \longmapsto \bigcap_{\substack{N\in\Max(D_2)\\ \nu(P)\subseteq N}}(D_2)_N= \bigcap_{\substack{M\in\Max(D_1)\\ P\subseteq M}}(D_2)_{\nu(M)}
\end{aligned}
\end{equation*}
is a well-defined bijection; we can subsequently extend it to the whole $\SkOver(D)$ by putting $\nu(T_1\cap\cdots\cap T_n)=\nu(T_1)\cap\cdots\cap\nu(T_n)$ for every $T_1,\ldots,T_n\in\Theta_1$, obtaining again a bijection.

The last point is a direct consequence of the construction.
\end{proof}

With this notation, we can state one of the main theorems of the paper.
\begin{teor}\label{teor:semistar}
Let $D_1,D_2$ be semilocal Pr\"ufer domains, and suppose that there is an order isomorphism $\nu:\hiSpec(D_1)\longrightarrow\hiSpec(D_2)$ such that, for every $P\in\hiSpec(D_1)$, there is an order isomorphism $\nu_P:\insfstar(Z(P))\longrightarrow\insfstar(Z(\nu(P)))$. Then, there are order isomorphisms
\begin{equation*}
\boldnu:\inssemistar(D_1)\longrightarrow\inssemistar(D_2)
\end{equation*}
and
\begin{equation*}
\boldnu_F:\insfstar(D_1)\longrightarrow\insfstar(D_2)
\end{equation*}
such that, for every $\Delta\subseteq\SkOver(D_1)$,
\begin{equation*}
\boldnu(\inssemisupp{\Delta}(D_1))=\inssemisupp{\overline{\nu}(\Delta)}(D_2),
\end{equation*}
where $\overline{\nu}:\SkOver(D_1)\longrightarrow\SkOver(D_2)$ is the bijection found in Proposition \ref{prop:nuSkOver}.
\end{teor}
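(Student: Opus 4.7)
The plan is to apply Proposition \ref{prop:nuSkOver} to obtain $\overline{\nu}\colon\SkOver(D_1)\to\SkOver(D_2)$ restricting to a bijection $\Theta_1\to\Theta_2$, and then argue by strong induction on $n:=|\hiSpec(D_1)|$. The base case $n=1$ is trivial ($D_1$ is a field, so $\inssemistar(D_i)=\insfstar(D_i)=\{d\}$). For the inductive step we produce, under the induction hypothesis, an order isomorphism $\nu_T\colon\insfstar(T)\to\insfstar(\overline{\nu}(T))$ for every $T\in\Theta_1$; feeding these together with $\overline{\nu}|_{\Theta_1}$ into Proposition \ref{prop:nuT} yields $\boldnu$, and the explicit construction described immediately before that proposition guarantees $\boldnu(\inssemisupp{\Delta}(D_1))=\inssemisupp{\overline{\nu}(\Delta)}(D_2)$. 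The companion map $\boldnu_F$ will be obtained from the product decomposition $\insfstar(D)\cong\prod_{T\in\Theta}\insfstar(T)$ valid for any Jaffard family (noted at the end of Section \ref{sect:jaff}) together with the same $\nu_T$'s.

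We split the inductive step according to $|\Theta_1|$. If $|\Theta_1|\geq 2$, then for each $T\in\Theta_1$ one has $|\hiSpec(T)|<n$, because the nonzero elements of $\hiSpec(T)$ correspond bijectively to those nonzero elements of $\hiSpec(D_1)$ lying below the dependence class that defines $T$---a proper subset. A short computation shows that $Z(P)$ computed inside $T$ agrees with $Z(P)$ computed inside $D_1$ for every $P\in\hiSpec(T)$, using that $T_P=(D_1)_P$ and that the prime directly below $P$ in $\hiSpec(T)$ coincides with the one in $\hiSpec(D_1)$. Hence the hypotheses of the theorem transfer to the pair $(T,\overline{\nu}(T))$ via restriction of $\nu$ and of the given $\nu_P$'s, and the induction yields $\inssemistar(T)\cong\inssemistar(\overline{\nu}(T))$. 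Since $T$ has a nonzero prime in its Jacobson radical, $\insfstar(T)=\inssemistar(T)\setminus\{\wedge_{\{K\}}\}$, and any order isomorphism preserves the top; the isomorphism therefore restricts to $\insfstar(T)\cong\insfstar(\overline{\nu}(T))$, which we take as $\nu_T$.

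If $|\Theta_1|=1$, Proposition \ref{prop:nuT} on its own gives no reduction, so we invoke Proposition \ref{prop:cutting}. Let $P$ be the unique minimal element of $\hiSpec(D_1)\setminus\{(0)\}$ (which lies in $\Jac(D_1)$ since all maximal ideals of $D_1$ are dependent), and set $P':=\nu(P)$. Proposition \ref{prop:cutting} decomposes $\inssemistar(D_i)$ as the ordinal sum of $\inssemistar(D_i/P_i)$ and $\inssemistar((D_i)_{P_i})\setminus\{d\}$ (with $P_1=P$, $P_2=P'$). The quotient $D_1/P$ has $|\hiSpec(D_1/P)|=n-1$, and $Z(Q)$ computed inside $D_1/P$ agrees with $Z(Q^*)$ computed inside $D_1$ for the lift $Q^*$ of any nonzero $Q$, so the hypotheses transfer and the induction yields $\inssemistar(D_1/P)\cong\inssemistar(D_2/P')$. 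For the second summand, $(D_1)_P$ is a valuation domain with $Z(P)=(D_1)_P$, and the given $\nu_P$ provides $\insfstar((D_1)_P)\cong\insfstar((D_2)_{P'})$; adjoining $\wedge_{\{K\}}$ as top and removing $d$ at the bottom gives $\inssemistar((D_1)_P)\setminus\{d\}\cong\inssemistar((D_2)_{P'})\setminus\{d\}$. Summing the two halves as ordinal sums produces $\boldnu$; support preservation is immediate because $\SkOver(D_1)=\{D_1,K\}$ has only two elements, and $\boldnu_F:=\boldnu|_{\insfstar(D_1)}$ works since $\insfstar(D_1)=\inssemistar(D_1)\setminus\{\wedge_{\{K\}}\}$ in this case.

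The main delicate point is the bookkeeping of the rings $Z(P)$: one must verify that the $Z$-ring of a prime inside a smaller domain ($T$ or $D_1/P$) coincides with the $Z$-ring computed inside $D_1$ itself. Both identifications boil down to tracking how the ``directly below'' relation behaves in the two relevant versions of $\hiSpec$, and away from $(0)$ the branching structure is simply inherited; this careful matching is what makes the hypothesis transfer between levels of the induction and what allows the recursion to close.
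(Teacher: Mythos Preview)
Your proposal is correct and follows essentially the same approach as the paper: both arguments induct on $|\hiSpec(D_1)|$, using Proposition~\ref{prop:nuT} (together with the Jaffard decomposition of $\insfstar$) to handle the case $|\Theta_1|\geq 2$ and Proposition~\ref{prop:cutting} to handle the case $|\Theta_1|=1$. The paper organizes the induction as a two-step alternation $(FS_n)\Rightarrow(SS_n)\Rightarrow(FS_{n+1})$, whereas you fold both conclusions into a single strong induction; your explicit verification that the rings $Z(P)$ match when passing to $T\in\Theta_1$ or to $D_1/P$ fills in a detail the paper leaves to the phrase ``the hypotheses of the theorem descend to these cases.''
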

\begin{proof}
We proceed by induction on the cardinality of $\hiSpec(D)$. For every $k\inN$, $k>0$, let:
\begin{description}
\item[$(SS_k)$] $\boldnu$ exists whenever the hypotheses hold and $|\hiSpec(D_1)|\leq n$;
\item[$(FS_k)$] $\boldnu_F$ exists whenever the hypotheses hold and $|\hiSpec(D_1)|\leq n$.
\end{description}
(Note that the existence of $\nu$ guarantees that $|\hiSpec(D_1)|=|\hiSpec(D_2)|$.) We will show that $(FS_2)$ is true and that $(FS_n)\Longrightarrow(SS_n)\Longrightarrow(FS_{n+1})$; by induction, this will prove $(FS_n)$ and $(SS_n)$ for every $n$. Note that $(FS_1)$ and $(SS_1)$ are trivial, since they correspond to the case where $D_1$ and $D_2$ are fields.

\medskip

$(FS_2)$. If $|\hiSpec(D_1)|=2$, then $D_1$ and $D_2$ are valuation domains; hence, $\hiSpec(D_1)=\{(0),M\}$ (where $M$ is the maximal ideal of $D_1$) and $Z(M)=D_1$. Hence, the claim is just the hypothesis $\insfstar(Z(P))\leftrightarrow\insfstar(Z(\nu(P)))$. 

\medskip

$(FS_n)\Longrightarrow(SS_n)$ can be proved by following the reasoning of the proof of Proposition \ref{prop:nuT}, since if $T$ is in the standard decomposition of $D$ then $|\hiSpec(T)|\leq|\hiSpec(D)|$.

\medskip

$(SS_n)\Longrightarrow(FS_{n+1})$. Suppose first that $\Theta_1$ is a singleton, i.e., that $\Theta_1=\{D_1\}$. Then, there is a $P\in\hiSpec(D_1)$ contained in every maximal ideal of $D_1$, and every overring of $D_1$ (except for the quotient field $K_1$), is a fractional ideal of $D_1$: therefore, $\insfstar(D_1)=\inssemistar(D_1)\setminus\{\wedge_{\{K_1\}}\}$. By Proposition \ref{prop:cutting}, $\insfstar(D_1)$ is order-isomorphic to the ordinal union of $\inssemistar(D_1/P)$ and $\inssemistar((D_1)_P)\setminus\{d,\wedge_{\{K_1\}}\}$, and analogously $\insfstar(D_2)\simeq\inssemistar(D_2/\nu(P))\oplus(\inssemistar((D_2)_{\nu(P)})\setminus\{d,\wedge_{\{K_2\}}\})$.

We have $|\hiSpec(D_1/P)|=|\hiSpec(D)|-1$ and $|\hiSpec((D_1)_P)|=2$; by inductive hypothesis, and since the hypotheses of the theorem descend to these cases, we have order isomorphisms $\inssemistar(D_1/P)\simeq\inssemistar(D_2/\nu(P))$, while $\inssemistar((D_1)_P)\simeq\inssemistar((D_2)_{\nu(P)})$. Hence, there is an order isomorphism $\boldnu:\insfstar(D_1)\longrightarrow\insfstar(D_2)$.

Suppose now that $\Theta_1$ is not a singleton. By Proposition \ref{prop:cutting}, there is an order isomorphism between $\insfstar(D_1)$ and $\prod\{\insfstar(T)\mid T\in\Theta\}$, and analogously for $D_2$; moreover, as in the previous case, $\insfstar(T)=\inssemistar(T)\setminus\{\wedge_{\{K_1\}}\}$. Since $\Theta_1$ is not a singleton,  $|\hiSpec(T)|<|\hiSpec(D_1)|$ for every $T\in\Theta$; applying the inductive hypothesis, we have order isomorphisms $\nu_T:\inssemistar(T)\longrightarrow\inssemistar(\overline{\nu}(T))$, which (by the previous part of the proof) descend to order isomorphisms $\nu'_T:\insfstar(T)\longrightarrow\insfstar(\overline{\nu}(T))$. Therefore, we get an order isomorphism $\boldnu_F:\insfstar(D_1)\longrightarrow\insfstar(D_2)$ just by taking the product of the $\nu'_T$.

By induction, the claim is proved.
\end{proof}

\section{Star and (semi)star operations}
Theorem \ref{teor:semistar} shows that the sets $\inssemistar(D)$ and $\insfstar(D)$ of (respectively) the semistar operations and the fractional star operations on $D$ depends exclusively on $\hiSpec(D)$ and the semistar operations on the rings $Z(P)$. However, these properties are not enough to determine which operations close $D$, i.e., which closures are star or (semi)star operations.

For example, let $(V,M_V)$ be a one-dimensional valuation domain with $M_V$ not principal, and let $(W,M_W)$ be a two-dimensional valuation domain such that $M_W$ is principal, as well as $PW_P$ (where $P$ is the other nonzero prime of $W$). Then, $\hiSpec(V)=\{0,M_V\}$ correspond bijectively to $\hiSpec(W)=\{0,M_W\}$; moreover, both $\insfstar(V)$ and $\insfstar(W)$ are linearly ordered sets with three elements, so that they are order-isomorphic. However, there are two semistar operations closing $V$ (the identity and the $v$-operation) while only one closing $W$ (the identity). Hence, the bijection $\boldnu:\inssemistar(V)\longrightarrow\inssemistar(W)$ given by Theorem \ref{teor:semistar} does not restrict to a bijection $\boldnu:\inssmstar(V)\longrightarrow\inssmstar(W)$. In this section, we determine which hypothesis we have to add to obtain an analogous result.

We start with characterizing (semi)star operations through the map $\Gamma$.
\begin{prop}\label{prop:(semi)Gamma}
Let $D$ be a semilocal Pr\"ufer domain, $\Theta$ its standard decomposition, $\star\in\inssemistar(D)$; for each $T\in\Theta$, let $\Gamma_T(\star)$ be the map defined before Proposition \ref{prop:GammaTstar}. Then, $D=D^\star$ if and only if $D\in\supp(\star)$ and $\Gamma_T(\star)(D)\in\insstar(T)$ for every $T\in\Theta$.
\end{prop}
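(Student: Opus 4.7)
The plan is to prove both directions using the characterization of $\star$ via the inverse map $\Phi$ from Theorem \ref{teor:sstar->hom}, reducing everything to the identity $D^\star = \bigcap_{T \in \Theta} T^{\Gamma_T(\star)(D)}$.

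For the forward direction, suppose $D^\star = D$. Since $D$ is trivially a fractional ideal of itself, $D^\star = D \in \insfracid(D)$, so $D \in \supp(\star)$. Now fix $T \in \Theta$. By definition, $\Gamma_T(\star)(D) = \lambda_{D,T}(\star|_{\insfracid(D)})$, which is a fractional star operation on $T$. Applying it to $T = DT$ and unpacking the definition of the localization map, we get $T^{\Gamma_T(\star)(D)} = D^\star T = DT = T$, hence $\Gamma_T(\star)(D)$ is a star operation on $T$.

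For the converse, assume $D \in \supp(\star)$ (so that both the element $I = D \in \insfracid(D)$ and the ``$B$'' in the definition of $\Phi$ equal $D$) and $\Gamma_T(\star)(D) \in \insstar(T)$ for every $T \in \Theta$. By Theorem \ref{teor:sstar->hom}, $\star = \Phi(\Gamma(\star))$, so the explicit formula for $\Phi$ gives
\begin{equation*}
D^\star = \bigcap_{\substack{T \in \Theta \\ T \supseteq D}} (DT)^{\Gamma_T(\star)(D)} = \bigcap_{T \in \Theta} T^{\Gamma_T(\star)(D)}.
\end{equation*}
By hypothesis each factor $T^{\Gamma_T(\star)(D)}$ equals $T$, so $D^\star = \bigcap_{T \in \Theta} T = D$, using that $\Theta$ is a Jaffard family of $D$.

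The argument is quite direct once one recognizes that the support hypothesis is exactly what is needed to invoke the $\Phi$-formula at $I = D$; the only mildly subtle point is the forward direction, where one must unravel the definition of $\lambda_{D,T}$ to see that $T^{\Gamma_T(\star)(D)}$ really is $D^\star T$. No obstacle should be encountered.
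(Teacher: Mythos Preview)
Your proof is correct and follows essentially the same route as the paper's: the forward direction unpacks $\lambda_{D,T}$ to get $T^{\Gamma_T(\star)(D)}=D^\star T=T$, and the converse applies the $\Phi$-formula from Theorem~\ref{teor:sstar->hom} at $I=D$ (with $B=D$ thanks to the support hypothesis) to obtain $D^\star=\bigcap_{T\in\Theta}T=D$.
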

\begin{proof}
If $D=D^\star$, then $D\in\supp(\star)$ (since $D$ is always in $\SkOver(D)$), and thus $D\in\Delta(T)$ for every $T\in\Theta$. By definition, $\Gamma_T(\star)(D)=\lambda_{D,T}(\star|_{\insfracid(D)})$; however, $D=D^{\star|_{\insfracid(D)}}$, and thus
\begin{equation*}
T^{\Gamma_T(\star)(D)}=(DT)^{\Gamma_T(\star)(D)}=D^\star T=T,
\end{equation*}
and $\Gamma_T(\star)(D)\in\insstar(T)$.

Conversely, suppose the two properties hold, and let $\Delta:=\supp(\star)$. By the proof of Theorem \ref{teor:sstar->hom}, we have
\begin{equation*}
D^\star=D^{\Phi_\Delta\circ\Gamma_\Delta(\star)}=\bigcap_{T\in\Theta}(DT)^{\Gamma_T(\star)(D)},
\end{equation*}
noting that each $\Delta(T)$ contains $D$ and thus it is nonempty. By hypothesis, each $\Gamma_T(\star)(D)$ closes $T$; thus, $D^\star=\bigcap_{T\in\Theta}T=D$. The claim is proved.
\end{proof}

If $D\in\Delta(T)$, let us thus denote by $\tildhom(\Delta(T),\insfstar(T))$ the set of order-preserving maps $\psi$ from $\Delta(T)$ to $\insfstar(T)$ such that $\psi(D)\in\insstar(D)$. The previous proposition can thus be rewritten as follows: given a $\Delta\subseteq\SkOver(D)$ closed by intersections and containing $D$ and $K$, there is a bijection between $\inssmsupp{\Delta}(D)$ (i.e., the set of (semi)star operations with support $\Delta$) and the product $\prod\{\tildhom(\Delta(T),\insfstar(T))\mid T\in\Theta\}$.

We thus obtain immediately an analogue of Proposition \ref{prop:nuT}: if $D_1,D_2$ are semilocal Pr\"ufer domains, with standard decompositions $\Theta_1,\Theta_2$, and there are bijections $\nu:\Theta_1\longrightarrow\Theta_2$ and $\nu_T:\insfstar(T)\longrightarrow\insfstar(\nu(T))$, for every $T\in\Theta$, and if $\nu_T(\insstar(T))=\insstar(\nu(T))$, then the order isomorphism $\boldnu:\inssemistar(D_1)\longrightarrow\inssemistar(D_2)$ restricts to a bijection from $\inssmstar(D_1)$ to $\inssmstar(D_2)$. We can actually say more.
\begin{teor}\label{teor:(semi)star}
Let $D_1,D_2$ be semilocal Pr\"ufer domains, and suppose that there is an order isomorphism $\nu:\hiSpec(D_1)\longrightarrow\hiSpec(D_2)$ such that:
\begin{enumerate}
\item for every $P\in\hiSpec(D_1)$, there is an order isomorphism $\nu_P:\insfstar(Z(P))\longrightarrow\insfstar(Z(\nu(P)))$;
\item for every $M\in\Max(D_1)$, $M(D_1)_M$ is principal if and only if $\nu(M)(D_2)_{\nu(M)}$ is principal.
\end{enumerate}
Then, the maps $\boldnu:\inssemistar(D_1)\longrightarrow\inssemistar(D_2)$ and $\boldnu_F:\insfstar(D_1)\longrightarrow\insfstar(D_1)$ found in Theorem \ref{teor:semistar} restrict to order isomorphisms $\boldnu_{(S)}:\inssmstar(D_1)\longrightarrow\inssmstar(D_2)$ and $\boldnu_S:\insstar(D_1)\longrightarrow\insstar(D_2)$.
\end{teor}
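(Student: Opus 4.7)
The plan is to strengthen the inductive scheme used to prove Theorem \ref{teor:semistar} by tracking, at each stage, the further property that the isomorphisms built respect the subsets $\insstar(T)\subseteq\insfstar(T)$, and to exploit Proposition \ref{prop:(semi)Gamma} to pass from these local statements to the global ones. Concretely, I would reformulate the two inductive statements as:
\begin{description}
\item[$(FS_n^*)$] the isomorphism $\boldnu_F$ of $(FS_n)$ can be chosen so as to satisfy $\boldnu_F(\insstar(D_1))=\insstar(D_2)$;
\item[$(SS_n^*)$] the isomorphism $\boldnu$ of $(SS_n)$ can be chosen so as to satisfy $\boldnu(\inssmstar(D_1))=\inssmstar(D_2)$.
\end{description}
The global reduction is the following: by Proposition \ref{prop:nuSkOver}, the ring $D_1$ (the bottom of $\SkOver(D_1)$) corresponds to $D_2$ under $\overline{\nu}$; hence, by Proposition \ref{prop:(semi)Gamma}, $\boldnu(\star)$ is a (semi)star on $D_2$ if and only if $\star$ is a (semi)star on $D_1$, provided each of the factor isomorphisms $\nu_T':\insfstar(T)\to\insfstar(\overline{\nu}(T))$ that assemble $\boldnu$ sends $\insstar(T)$ onto $\insstar(\overline{\nu}(T))$. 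Thus the heart of the matter is to verify $(FS_n^*)$ for every $n$, because the passage $(FS_n^*)\Rightarrow(SS_n^*)$ is then automatic from Proposition \ref{prop:(semi)Gamma} combined with the assembly procedure of Proposition \ref{prop:nuT}.

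The induction follows exactly the three stages of the proof of Theorem \ref{teor:semistar}. The base case $(FS_2^*)$, where $D_1$ and $D_2$ are valuation domains, is the only place where hypothesis~(2) needs to be invoked directly: here $Z(M_i)=D_i$, the given $\nu_{M_1}$ already provides a bijection $\insfstar(D_1)\to\insfstar(D_2)$, and one must argue that (possibly after replacing $\nu_{M_1}$ by another isomorphism, since order isomorphisms of chains are essentially rigid) this bijection sends $\insstar(D_1)$ onto $\insstar(D_2)$. The intrinsic characterization to be used is that, in a valuation domain, whether the maximal ideal is principal determines which fractional star operations close the ring --- the $v$-operation fixes $M$ exactly when $M$ is principal, and more generally $\insstar(V)$ is a distinguished subset of $\insfstar(V)$ whose description depends only on principality of $M$. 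Hypothesis~(2), which in the local case reads ``$M_1$ principal iff $M_2$ principal'', is precisely what is needed to match these subsets.

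For the inductive step $(SS_n^*)\Rightarrow (FS_{n+1}^*)$, I would split into the two subcases of the proof of Theorem \ref{teor:semistar}. If $\Theta_1\neq\{D_1\}$, each $T\in\Theta_1$ has $|\hiSpec(T)|\leq n$, hypotheses~(1) and~(2) descend to each such pair $(T,\overline{\nu}(T))$ (the maximal ideals of $T$ being localizations at a subset of maximal ideals of $D_1$, with the same principality behaviour), and taking the product of the inductive isomorphisms respects star operations componentwise. If $\Theta_1=\{D_1\}$, let $P\in\hiSpec(D_1)$ be the nonzero prime contained in $\Jac(D_1)$; by Proposition \ref{prop:cutting}, $\insfstar(D_1)$ is the ordinal sum $\inssemistar(D_1/P)\oplus(\inssemistar((D_1)_P)\setminus\{d,\wedge_{\{K_1\}}\})$, and the subset $\insstar(D_1)=\inssmstar(D_1)$ corresponds under this decomposition to $\inssmstar(D_1/P)$. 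Here one checks that hypotheses~(1) and~(2) transfer to $D_1/P$ (the relevant spectra and $Z(Q)$'s are preserved, and the principality condition on $\Max(D_1/P)$ follows from the corresponding one on $\Max(D_1)$ via the isomorphism $(D_1/P)_{M/P}\cong (D_1)_M/P(D_1)_M$) and to $(D_1)_P$; applying $(SS_n^*)$ to $D_1/P$ and $(FS_2^*)$ to $(D_1)_P$, the ordinal sum decomposition immediately produces the desired $\boldnu_F$ respecting $\insstar$.

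The main obstacle I expect is the base case: precisely identifying, on a valuation domain, which elements of $\insfstar(V)$ lie in $\insstar(V)$ in a way that depends \emph{only} on the isomorphism type of $\insfstar(V)$ together with the principality of the maximal ideal, and then showing that this matching is preserved by the (essentially unique) order isomorphism provided by hypothesis~(1). The inductive steps are, by comparison, routine once the compatibility of hypothesis~(2) with quotients and with passage to members of the standard decomposition has been checked.
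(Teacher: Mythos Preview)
Your proposal is correct and follows essentially the same inductive scheme as the paper's proof (the paper's statements $(S_k)$ and $(Ss_k)$ are your $(FS_k^*)$ and $(SS_k^*)$, and the three stages---base case, $(S_n)\Rightarrow(Ss_n)$ via Proposition~\ref{prop:(semi)Gamma}, and $(Ss_n)\Rightarrow(S_{n+1})$ via the singleton/non-singleton split---are identical). The base-case obstacle you flag dissolves once you observe that on a valuation domain $V$ the set $\insstar(V)$ is an initial segment of the chain $\insfstar(V)$, of size $1$ or $2$ according as the maximal ideal is or is not principal; hypothesis~(2) then forces the unique order isomorphism of chains to match these initial segments.
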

\begin{proof}

By Theorem \ref{teor:semistar}, the hypothesis guarantee that $\boldnu$ and $\boldnu_F$ are order isomorphisms.

The proof follows the same reasoning of the the proof of Theorem \ref{teor:semistar}: for every $k\inN$, $k>0$, let:
\begin{description}
\item[$(Ss_k)$] $\boldnu_{(S)}$ exists whenever the hypotheses hold and $|\hiSpec(D_1)|\leq n$;
\item[$(S_k)$] $\boldnu_S$ exists whenever the hypotheses hold and $|\hiSpec(D_1)|\leq n$.
\end{description}
Then, $(S_2)$ is true because, if $V$ is a valuation domain, $M$ is principal if and only if $|\insstar(D_1)|=1$, while $M$ is not principal if and only if $|\insstar(D_1)|=2$; furthermore, $(S_n)\Longrightarrow(Ss_n)$ follows from the reasoning before the statement of the theorem. 

To show $(Ss_n)\Longrightarrow(S_{n+1})$, we first suppose that $\Theta_1$ is a singleton: then, $\insstar(D_1)=\inssmstar(D_1)$, and the isomorphism between $\insfstar(D_1)$ and $\inssemistar(D_1/P)\oplus(\inssemistar((D_1)_P)\setminus\{d,\wedge_{\{K_1\}}\})$  (Proposition \ref{prop:cutting}) restricts to an isomorphism between $\insstar(D_1)$ and $\inssmstar(D_1/P)$ ; the inductive hypothesis shows that $\boldnu$ restricts to a bijection $\boldnu_{(S)}:\insstar(D_1)\longrightarrow\insstar(D_2)$.

On the other hand, if $\Theta_1$ is not a singleton, we use \cite[Theorem 5.4]{starloc} to reduce $\insstar(D_1)$ to the product $\prod\{\insstar(T)\mid T\in\Theta\}$, and then apply the inductive hypothesis on each $T$.

The claim then follows by induction.
\end{proof}

Suppose now that $D$ is a semilocal Pr\"ufer domain whose standard decomposition if $\{D\}$. As we have observed multiple times, there is a unique element of $\hiSpec(D)$ just above $(0)$: call it $P$. Then, $\insstar(D)$ corresponds to $\inssmstar(D/P)$; in particular, $\insstar(D)$ cannot depend on $\inssemistar(Z(P))$, since it depends exclusively on $D/P$.

We can thus get the following results.
\begin{teor}\label{teor:star-nomin}
Let $D_1,D_2$ be semilocal Pr\"ufer domains, and suppose that there is an order isomorphism $\nu:\hiSpec(D_1)\longrightarrow\hiSpec(D_2)$ such that:
\begin{enumerate}
\item for every $P\in\hiSpec(D_1)$ such that $P$ is not minimal in $\hiSpec(D_1)\setminus\{(0)\}$, there is an order isomorphism $\nu_P:\insfstar(Z(P))\longrightarrow\insfstar(Z(\nu(P)))$;
\item for every $M\in\Max(D_1)$, $M(D_1)_M$ is principal if and only if $\nu(M)(D_2)_{\nu(M)}$ is principal.
\end{enumerate}
Then, there is an order isomorphism $\boldnu_S$ between $\insstar(D_1)$ and $\insstar(D_2)$.
\end{teor}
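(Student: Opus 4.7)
The plan is to follow the inductive architecture of Theorem \ref{teor:(semi)star}, invoking that theorem as a black box at the crucial step, the new input being the observation that hypothesis (1) at the minimal elements of $\hiSpec(D_1)\setminus\{(0)\}$ is redundant because those primes are quotiented away in the reduction $\insstar(D)\simeq\inssmstar(D/P)$ furnished by Proposition \ref{prop:cutting}. I induct on $n:=|\hiSpec(D_1)|$; the base case $n\leq 2$ is identical to that of Theorem \ref{teor:(semi)star}, where hypothesis (2) forces $|\insstar(D_1)|=|\insstar(D_2)|$ in the valuation case.

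For the inductive step, suppose first that $|\Theta_1|\geq 2$. Then \cite[Theorem 5.4]{starloc} applied to both standard decompositions yields $\insstar(D_i)\simeq\prod_{T\in\Theta_i}\insstar(T)$. Each $T\in\Theta_1$ satisfies $|\hiSpec(T)|<n$, and the hypotheses of the theorem descend to $T$ versus $\overline{\nu}(T)$: principality transfers because $T_M=(D_1)_M$ for $M\in\Max(T)$, and a prime $Q$ non-minimal in $\hiSpec(T)\setminus\{(0)\}$ is automatically non-minimal in $\hiSpec(D_1)\setminus\{(0)\}$, since the unique minimum of the former is the infimum of the equivalence class defining $T$, itself a minimal element of $\hiSpec(D_1)\setminus\{(0)\}$. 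The inductive hypothesis applied to each factor and the product yield the claimed isomorphism.

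If instead $\Theta_1=\{D_1\}$, let $P$ be the unique minimum of $\hiSpec(D_1)\setminus\{(0)\}$. By Proposition \ref{prop:cutting}, $\insstar(D_1)\simeq\inssmstar(D_1/P)$ and analogously for $D_2$; so it suffices to produce an order isomorphism $\inssmstar(D_1/P)\simeq\inssmstar(D_2/\nu(P))$. The quotient $D_1/P$ is a semilocal Pr\"ufer domain with $|\hiSpec(D_1/P)|=n-1$, and every non-zero $Q\in\hiSpec(D_1/P)$ corresponds to a prime of $D_1$ strictly above $P$, hence non-minimal in $\hiSpec(D_1)\setminus\{(0)\}$. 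Since the $Z$-rings computed in $D_1/P$ coincide with the corresponding ones in $D_1$ by localization-quotient commutation, hypothesis (1) of the current theorem supplies the isomorphism $\insfstar(Z(Q))\simeq\insfstar(Z(\nu(Q)))$ for \emph{every} non-zero $Q\in\hiSpec(D_1/P)$. This is precisely why the hypothesis at the minimal primes of $\hiSpec(D_1)\setminus\{(0)\}$ can be dropped.

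The only remaining point, and what initially appears as the main obstacle, is the principality hypothesis (2) of Theorem \ref{teor:(semi)star} applied to $D_1/P$ versus $D_2/\nu(P)$. It dissolves through the following observation: for any valuation domain $V$ with non-zero prime $P'$, the maximal ideal of $V$ is principal if and only if the maximal ideal of $V/P'$ is, because the value group of $V/P'$ is the convex subgroup $\Delta_{P'}\subseteq\Gamma_V$, and convexity guarantees that a least positive element of $\Gamma_V$ (when it exists) must lie in $\Delta_{P'}$, while conversely a least positive element of $\Delta_{P'}$ remains least positive in $\Gamma_V$. Applied to $V=(D_1)_M$ for each $M\in\Max(D_1)$, this transfers hypothesis (2) from $D_1$ to $D_1/P$, completing the verification. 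Theorem \ref{teor:(semi)star} then yields $\inssmstar(D_1/P)\simeq\inssmstar(D_2/\nu(P))$, closing the induction.
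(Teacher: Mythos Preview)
Your argument is correct and follows essentially the same route as the paper: reduce each factor $T$ of the standard decomposition to the quotient $T/P_T$ and then invoke Theorem~\ref{teor:(semi)star}. The paper does this in one step without induction (since every $T\in\Theta_1$ already satisfies $\Theta_T=\{T\}$, the decomposition immediately lands in your single-factor case), whereas your inductive layer in the case $|\Theta_1|\geq 2$ simply unwinds to the same reduction; on the other hand, your explicit verification that principality of $M(D_1)_M$ transfers to $(M/P)(D_1/P)_{M/P}$ via the convex-subgroup argument fills in a detail the paper leaves implicit.
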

\begin{proof}
By \cite[Theorem 5.4]{starloc}, $\insstar(D_1)\simeq\prod\{\insstar(T)\mid T\in\Theta_1\}$ and $\insstar(D_2)\simeq\prod\{\insstar(U)\mid U\in\Theta_2\}$ (where $\Theta_1$ and $\Theta_2$ are the standard decompositions of $D_1$ and $D_2$). By the previous reasoning, $\insstar(T)\simeq\inssmstar(T/P_T)$ (where $P_T$ is the minimal element of $\hiSpec(T)\setminus\{(0)\}$); we can apply Theorem \ref{teor:(semi)star} to each $T/P_T$, obtaining order isomorphisms $\boldnu_S(T):\insstar(T)\longrightarrow\insstar(\nu(T))$. To conclude, we just take $\boldnu_S$ to be the product of all the $\boldnu_S(T)$.
\end{proof}

Notice that, under the hypotheses of the last theorem, the isomorphisms $\boldnu$ and $\boldnu_F$ need not to exist, and thus Theorem \ref{teor:(semi)star} cannot be reduced to a corollary of Theorem \ref{teor:star-nomin}.

\section{The finite-dimensional case}\label{sect:findim}
The results in the previous two sections can be simplified if we work in the finite-dimensional case. Indeed, suppose $V$ is a finite-dimensional valuation domain: then, $V$ admits only a finite number of overrings (its localizations) and each one admits a finite number of (semi)star operations (at most two, the identity and the $v$-operation). Therefore, $\inssemistar(V)$ is finite; since it is also linearly ordered, it is actually characterized by its cardinality.

Following this idea, we introduce the functions
\begin{equation*}
\begin{aligned}
\omega\colon\hiSpec(D) & \longrightarrow \insN^+\\
P & \longmapsto |\insfstar(Z(P))|
\end{aligned}
\end{equation*}
and
\begin{equation*}
\begin{aligned}
\epsilon\colon\Spec(D) & \longrightarrow \{1,2\}\\
P & \longmapsto |\insstar(D_P)|.
\end{aligned}
\end{equation*}
We note that $\omega$ can also be thought of as a function from the set of the edges of $\hiSpec(D)$ to $\insN^+$: if $E$ is an edge from $Q$ to $P$, then $\omega(E)$ would be defined as $\omega(P)$. Note also that $\omega((0))$ is always equal to 1.

The following propositions establish the properties of $\omega$ and $\epsilon$ and their connection.
\begin{prop}\label{prop:omegaepsilon-V}
Let $V$ be a valuation domain with maximal ideal $M$.
\begin{enumerate}[(a)]
\item\label{prop:omegaepsilon-V:semi} $|\inssemistar(V)|=\omega(M)+1$.
\item\label{prop:omegaepsilon-V:(semi)} $|\inssmstar(V)|=\epsilon(M)$.
\item\label{prop:omegaepsilon-V:princ} $\epsilon(M)=1$ if and only if $M$ is principal.
\item\label{prop:omegaepsilon-V:somma} Let $\mathcal{I}$ be the set of nonzero idempotent prime ideals of $V$ and $\mathcal{N}$ be the set of nonzero nonidempotent prime ideals of $V$. Then,
\begin{equation}\label{eq:omegaepsilon}
\omega(M)=\sum_{\substack{P\in\Spec(V)\\ P\neq(0)}}\epsilon(P)=|\mathcal{N}|+2\cdot|\mathcal{I}|.
\end{equation}
\end{enumerate}
\end{prop}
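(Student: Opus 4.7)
The plan is to derive (a) and (b) directly from the structural results of Sections 3 and 4, to cite (c) as the classical dichotomy for the $v$-operation on valuation domains, and to prove (d) by induction on $\dim V$ using Proposition \ref{prop:cutting}.

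The key observation for (a) and (b) is that a nontrivial valuation domain is its own standard decomposition, $\Theta=\{V\}$, so $\SkOver(V)=\{V,K\}$. The only subsets of $\SkOver(V)$ containing $K$ and closed under intersection are $\{K\}$ and $\{V,K\}$: the first supports only $\wedge_{\{K\}}$, while for $\Delta=\{V,K\}$ we have $\Delta(V)=\{V\}$, so Theorem \ref{teor:sstar->hom} yields a bijection
\begin{equation*}
\inssemisupp{\{V,K\}}(V)\;\longleftrightarrow\;\hom(\{V\},\insfstar(V))\;\cong\;\insfstar(V),
\end{equation*}
a set of cardinality $\omega(M)$; adding $\wedge_{\{K\}}$ establishes (a). For (b), Proposition \ref{prop:(semi)Gamma} shows that a $\star\in\inssemisupp{\{V,K\}}(V)$ is (semi)star exactly when its image under the bijection lies in $\insstar(V)$, while $\wedge_{\{K\}}$ is never (semi)star since $V\neq K$; this gives $|\inssmstar(V)|=|\insstar(V)|=\epsilon(M)$. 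Part (c) is classical: on any valuation domain $\insstar(V)\subseteq\{d,v\}$, with $v=d$ iff every fractional ideal is divisorial, which is equivalent to $M$ being principal.

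For (d), I would induct on $\dim V$. The base case $\dim V=1$ is immediate from (a) and (b) together with the fact that on a one-dimensional valuation domain $K$ is the only overring that is not a fractional ideal of $V$, so $|\insfstar(V)|=|\inssemistar(V)|-1=\omega(M)=\epsilon(M)=\sum_{P\neq(0)}\epsilon(P)$. For $\dim V\geq 2$, let $P_1$ be the height-one prime; since $P_1\subseteq\Jac(V)$, Proposition \ref{prop:cutting} provides
\begin{equation*}
\inssemistar(V)\cong\inssemistar(V/P_1)\oplus(\inssemistar(V_{P_1})\setminus\{d\}).
\end{equation*}
Counting both sides, subtracting one for $\wedge_{\{K\}}$, and applying part (a) to each summand gives $|\insfstar(V)|=\omega_{V/P_1}(M/P_1)+\omega_{V_{P_1}}(P_1V_{P_1})$. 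The inductive hypothesis of (d) on $V/P_1$ and the base case on the rank-one ring $V_{P_1}$ then rewrite the right-hand side as $\sum_{P\supsetneq P_1}\epsilon_V(P)+\epsilon_V(P_1)$; this requires verifying $\epsilon_{V/P_1}(P/P_1)=\epsilon_V(P)$ for $P\supsetneq P_1$, which in turn follows from the observation that if $x\in PV_P\setminus P_1V_P$ then $P_1V_P\subsetneq xV_P$ in the totally ordered ideal lattice of $V_P$, so principality of $PV_P/P_1V_P$ lifts to principality of $PV_P$. Since every nonzero prime of $V$ contains $P_1$, the sum collapses to $\sum_{P\neq(0)}\epsilon_V(P)$. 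The final equality $\sum_{P\neq(0)}\epsilon(P)=|\mathcal{N}|+2|\mathcal{I}|$ is then just (c) applied to each $V_P$ together with the classical fact that, in a valuation domain, a nonzero prime $P$ is idempotent iff $PV_P$ is non-principal in $V_P$ (which follows from $P^2$ being $P$-primary, so that $P^2V_P\cap V=P^2$, reducing the question to principality-vs-idempotency of the maximal ideal of $V_P$).

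The main technical hurdle is the bookkeeping in the inductive step, concentrated in the two small valuation-theoretic lemmas $\epsilon_{V/P_1}(P/P_1)=\epsilon_V(P)$ and the idempotency/principality correspondence; neither is deep, but handling them cleanly is what keeps the overall induction short and transparent.
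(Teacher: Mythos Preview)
Your argument is correct. For parts \ref{prop:omegaepsilon-V:semi}--\ref{prop:omegaepsilon-V:princ} you are effectively restating the paper's observation that every overring of $V$ other than $K$ is simultaneously a localization and a fractional ideal, only phrased through Theorem~\ref{teor:sstar->hom} and Proposition~\ref{prop:(semi)Gamma}; there is no real difference there.

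Part \ref{prop:omegaepsilon-V:somma} is where the approaches diverge. The paper does not induct: it observes directly that for every $\star\in\insfstar(V)$ the ring $V^\star$ is some $V_P$ with $P\neq(0)$, and that the fibre over a fixed $P$ is in bijection with $\insstar(V_P)$, which has $\epsilon(P)$ elements. This one-line partition already gives $\omega(M)=\sum_{P\neq(0)}\epsilon(P)$, and the second equality is then (c) applied to each $V_P$ together with the standard fact that $P$ is idempotent iff $PV_P$ is non-principal. Your route instead peels off the height-one prime via Proposition~\ref{prop:cutting}, reduces to $V/P_1$ and $V_{P_1}$, and assembles the sum inductively. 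That works, but it costs you the two auxiliary verifications ($\epsilon_{V/P_1}(P/P_1)=\epsilon_V(P)$ and the idempotent/principal correspondence) that the paper's partition argument avoids entirely. The paper's approach is shorter and does not rely on $\dim V<\infty$; yours has the virtue of illustrating once more how Proposition~\ref{prop:cutting} drives the whole theory, at the price of being tied to induction on a finite dimension.
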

\begin{proof}
\ref{prop:omegaepsilon-V:semi} and \ref{prop:omegaepsilon-V:(semi)} follow from the fact that every overring of $V$ different from $K$ is both a localization of $V$ and a fractional ideal of $V$, and they also show the first equality of \eqref{eq:omegaepsilon}. \ref{prop:omegaepsilon-V:princ} is well known. The second equality of \eqref{eq:omegaepsilon} follows from the fact that $P$ is nonidempotent if and only if $PV_P$ is principal, i.e., if and only if $\epsilon(P)=1$. \ref{prop:omegaepsilon-V:somma} is proved.
\end{proof}

\begin{prop}\label{prop:omegaepsilon}
Let $D$ be a semilocal finite-dimensional Pr\"ufer domain, and let $P\in\hiSpec(D)\setminus\{0\}$; let $Q$ be the element of $\hiSpec(D)$ directly below $P$. Let $\Delta:=\{A\in\Spec(D)\mid Q\subsetneq A\subseteq P\}$, and let $\mathcal{I}$ be the set of idempotent prime ideals of $D$ and $\mathcal{N}$ the set of nonidempotent prime ideals of $D$. Then,
\begin{equation*}
\omega(P)=\sum_{A\in\Delta}\epsilon(A)=|\Delta\cap\mathcal{N}|+2\cdot|\Delta\cap\mathcal{I}|.
\end{equation*}
\end{prop}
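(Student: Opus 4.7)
The plan is to reduce the statement to the valuation-domain case handled in Proposition \ref{prop:omegaepsilon-V}, applied to $V := Z(P) = D_P/QD_P$. By construction $V$ is a valuation domain, and by definition $\omega(P) = |\insfstar(V)|$; combining parts \ref{prop:omegaepsilon-V:semi} and \ref{prop:omegaepsilon-V:somma} of that proposition, applied to $V$, yields
\begin{equation*}
\omega(P) = \sum_{P' \in \Spec(V) \setminus \{(0)\}} \epsilon_V(P').
\end{equation*}

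The next step is to translate this sum over $\Spec(V)$ into a sum over $\Delta \subseteq \Spec(D)$. The standard bijection between the primes of $D_P/QD_P$ and the primes of $D$ between $Q$ and $P$ (inclusive) restricts to a bijection between the nonzero primes of $V$ and the elements of $\Delta$; under this correspondence, if $A \in \Delta$ maps to $P'$, then $V_{P'} = D_A/QD_A$. It remains to verify that $\epsilon_V(P') = \epsilon_D(A)$, i.e., that $AD_A$ is principal in $D_A$ if and only if its image $(AD_A)/(QD_A)$ is principal in $D_A/QD_A$. The key valuation-theoretic observation is that every nonzero element of $QD_A$ has strictly larger valuation than every element of $(AD_A) \setminus QD_A$: hence a minimum-valuation generator of $AD_A$, if any exists, can always be chosen outside $QD_A$, and its image generates the quotient; conversely, any lift of a generator of $(AD_A)/(QD_A)$ attains the minimum valuation in $AD_A$. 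I expect this principality-transfer step to be the main technical point.

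With the first equality $\omega(P) = \sum_{A \in \Delta} \epsilon(A)$ in hand, the second follows from Proposition \ref{prop:omegaepsilon-V}\ref{prop:omegaepsilon-V:princ}, which gives $\epsilon(A) = 1$ when $AD_A$ is principal and $\epsilon(A) = 2$ otherwise. Splitting $\Delta$ along this dichotomy yields $|\Delta \cap \mathcal{N}| + 2\,|\Delta \cap \mathcal{I}|$, once one knows that $A \in \mathcal{I}$ (idempotent in $D$) is equivalent to $AD_A$ being non-principal. This final equivalence uses that $D$ is Pr\"ufer: the equality $A = A^2$ in $D$ can be tested locally at every maximal $M \supseteq A$, where it amounts to $AD_M$ being idempotent in the valuation domain $D_M$, and this in turn, by the same valuation-theoretic observation, is governed solely by non-principality of $AD_A$.
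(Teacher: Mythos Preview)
Your proposal is correct and follows the same route as the paper: reduce to Proposition \ref{prop:omegaepsilon-V} applied to $V=Z(P)$ via the standard bijection between $\Delta$ and the nonzero primes of $V$, and check that the invariant $\epsilon$ (equivalently, idempotency of the prime) is preserved under this bijection. The paper's proof is a one-line appeal to exactly this, phrased in terms of idempotency rather than principality; your ``principality-transfer'' argument and the final local check that $A$ is idempotent in $D$ iff $AD_A$ is non-principal simply make explicit what the paper leaves to the reader.
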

\begin{proof}
The claim follows directly from Proposition \ref{prop:omegaepsilon-V} and the fact that a prime ideal $A$ such that $Q\subsetneq A\subseteq P$ is idempotent if and only if its extension in $Z(P)$ is.
\end{proof}

With this terminology, Theorem \ref{teor:semistar} translates immediately to the following statement.
\begin{teor}\label{teor:semistar-fd}
Let $D_1,D_2$ be semilocal Pr\"ufer domain of finite dimension. Suppose there is an order-preserving map $\nu:\hiSpec(D_1)\longrightarrow\hiSpec(D_2)$ such that $\omega(P)=\omega(\nu(P))$ for every $P\in\hiSpec(D_1)$. Then, there are order isomorphisms
\begin{equation*}
\boldnu:\inssemistar(D_1)\longrightarrow\inssemistar(D_2)
\end{equation*}
and
\begin{equation*}
\boldnu_F:\insfstar(D_1)\longrightarrow\insfstar(D_2)
\end{equation*}
such that, for every $\Delta\subseteq\SkOver(D_1)$ closed by intersections,
\begin{equation*}
\boldnu(\inssemisupp{\Delta}(D_1))=\inssemisupp{\overline{\nu}(\Delta)}(D_2),
\end{equation*}
where $\overline{\nu}:\SkOver(D_1)\longrightarrow\SkOver(D_2)$ is the bijection found in Proposition \ref{prop:nuSkOver}.
\end{teor}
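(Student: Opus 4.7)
The plan is to deduce Theorem~\ref{teor:semistar-fd} directly from Theorem~\ref{teor:semistar}. The only extra ingredient I need is the following auxiliary lemma: for any finite-dimensional valuation domain $W$, the poset $\insfstar(W)$ is totally ordered, and therefore determined up to order isomorphism by its cardinality alone.

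To prove the lemma I would induct on $\dim W$. The zero-dimensional case (a field) is trivial. For $\dim W=1$, any $\star\in\inssemistar(W)$ either closes $W$, giving an element of $\insstar(W)$, or else satisfies $W^\star=K$; in the latter case, for any nonzero $I\in\inssubmod(W)$ one picks $a\neq 0$ in $I$ and uses property $(\mathbf{S})$ to write $K=aW^\star=(aW)^\star\subseteq I^\star$, forcing $\star=\wedge_{\{K\}}$. Since $\insstar(W)$ equals $\{d\}$ or $\{d,v\}$ according to whether $M_W$ is principal, $\inssemistar(W)$ is totally ordered. For $\dim W\geq 2$, let $P_0$ be the smallest nonzero prime of $W$; then $P_0\subseteq M_W=\Jac(W)$, and Proposition~\ref{prop:cutting} (in the ordinal-sum form of Section~\ref{sect:posets}) gives
\begin{equation*}
\inssemistar(W)\cong\inssemistar(W/P_0)\oplus\bigl(\inssemistar(W_{P_0})\setminus\{d\}\bigr).
\end{equation*}
Both summands are totally ordered, the first by the inductive hypothesis applied to $W/P_0$ (a valuation domain of dimension $\dim W-1$) and the second by the one-dimensional base case applied to $W_{P_0}$; and an ordinal sum of totally ordered posets is totally ordered. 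Removing the maximum $\wedge_{\{K\}}$ leaves $\insfstar(W)$ totally ordered as well.

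With the lemma in hand the theorem is almost immediate. For each $P\in\hiSpec(D_1)$, the ring $Z(P)$ is a finite-dimensional valuation domain, so $\insfstar(Z(P))$ and $\insfstar(Z(\nu(P)))$ are finite totally ordered sets whose common cardinality equals $\omega(P)=\omega(\nu(P))$; hence there is a (necessarily unique) order isomorphism $\nu_P\colon\insfstar(Z(P))\longrightarrow\insfstar(Z(\nu(P)))$. The hypotheses of Theorem~\ref{teor:semistar} are thus in force, and invoking it produces $\boldnu$, $\boldnu_F$, and the claimed compatibility with supports. The one subtle point in the whole argument is the separate treatment of the one-dimensional case of the lemma, which is needed because applying Proposition~\ref{prop:cutting} with $P_0=M_W$ would yield only the tautology $\inssemistar(W)\cong\inssemistar(W/M_W)\oplus(\inssemistar(W)\setminus\{d\})$.
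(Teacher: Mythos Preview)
Your proposal is correct and follows essentially the same approach as the paper: both reduce to Theorem~\ref{teor:semistar} via the observation that $\insfstar(V)$ is totally ordered (hence determined by its cardinality) for any finite-dimensional valuation domain $V$. The paper states this fact at the start of Section~\ref{sect:findim} without a detailed argument, while you supply an explicit inductive proof using Proposition~\ref{prop:cutting}; otherwise the two proofs are the same.
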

\begin{proof}
Since $\insfstar(V)$ is linearly ordered for every valuation domain $V$, the condition $\omega(P)=\omega(\nu(P))$ implies that there is an isomorphism between $\insfstar(Z(P))$ and $\insfstar(Z(\nu(P)))$. Hence, we can apply Theorem \ref{teor:semistar}.
\end{proof}

In the same way, we have analogues of the results about (semi)star operations.
\begin{teor}\label{teor:(semi)star-fd}
Let $D_1,D_2$ be semilocal Pr\"ufer domains, and suppose that there is an order isomorphism $\nu:\hiSpec(D_1)\longrightarrow\hiSpec(D_2)$ such that $\epsilon(M)=\epsilon(\nu(M))$ for every $M\in\Max(D_1)$.
\begin{enumerate}[(a)]
\item\label{teor:(semi)star-fd:(semi)} If $\omega(P)=\omega(\nu(P))$ for $P\in\hiSpec(D_1)$, then the maps $\boldnu$ and $\boldnu_F$ found in Theorem \ref{teor:semistar} restrict to order isomorphisms $\boldnu_{(S)}:\inssmstar(D_1)\longrightarrow\inssmstar(D_2)$ and $\boldnu_S:\insstar(D_1)\longrightarrow\insstar(D_2)$.
\item\label{teor:(semi)star-fd:nomin} If $\omega(P)=\omega(\nu(P))$ for every $P\in\hiSpec(D)$ such that $P$ is not minimal in $\hiSpec(D_1)\setminus\{(0)\}$, then there is an order isomorphism $\boldnu_S$ between $\insstar(D_1)$ and $\insstar(D_2)$.
\end{enumerate}
\end{teor}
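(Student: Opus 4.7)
The plan is to deduce part \ref{teor:(semi)star-fd:(semi)} from Theorem \ref{teor:(semi)star} and part \ref{teor:(semi)star-fd:nomin} from Theorem \ref{teor:star-nomin}, using finite-dimensionality solely through the elementary observation that, for a finite-dimensional valuation domain $V$, the set $\insfstar(V)$ is a finite chain and is therefore determined up to order isomorphism by its cardinality alone.

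For part \ref{teor:(semi)star-fd:(semi)}, I would verify the two hypotheses of Theorem \ref{teor:(semi)star}. By definition $\omega(P)=|\insfstar(Z(P))|$, and each $Z(P)$ is a finite-dimensional valuation domain; hence the assumption $\omega(P)=\omega(\nu(P))$ yields an order isomorphism $\nu_P\colon\insfstar(Z(P))\longrightarrow\insfstar(Z(\nu(P)))$ for every $P\in\hiSpec(D_1)$, since two finite chains of the same cardinality are order-isomorphic. The remaining hypothesis of Theorem \ref{teor:(semi)star} concerns the principality of $M(D_1)_M$ for $M\in\Max(D_1)$; by Proposition \ref{prop:omegaepsilon-V}\ref{prop:omegaepsilon-V:princ}, applied to the valuation domains $(D_1)_M$ and $(D_2)_{\nu(M)}$, this principality is equivalent to $\epsilon(M)=1$, so the assumption $\epsilon(M)=\epsilon(\nu(M))$ delivers exactly the required equivalence. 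Invoking Theorem \ref{teor:(semi)star} then produces the desired order isomorphisms $\boldnu_{(S)}$ and $\boldnu_S$ as restrictions of $\boldnu$ and $\boldnu_F$.

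Part \ref{teor:(semi)star-fd:nomin} follows by the same translation, but applied to Theorem \ref{teor:star-nomin}: its hypothesis on $\omega$ is required precisely at those $P\in\hiSpec(D_1)$ that are not minimal in $\hiSpec(D_1)\setminus\{(0)\}$, which is exactly the range of $P$ over which we assume $\omega(P)=\omega(\nu(P))$. The condition on $\epsilon(M)$ is transported to the principality equivalence exactly as in part \ref{teor:(semi)star-fd:(semi)}. Since the argument amounts only to matching cardinalities of linearly ordered finite sets and reading off the principality of maximal ideals from $\epsilon$, there is no genuine obstacle: the whole proof is a direct rereading of Theorems \ref{teor:(semi)star} and \ref{teor:star-nomin} through the combinatorial invariants $\omega$ and $\epsilon$.
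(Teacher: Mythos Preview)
Your proposal is correct and follows exactly the paper's approach: the paper's proof is a one-line citation of Theorem \ref{teor:(semi)star} for part \ref{teor:(semi)star-fd:(semi)} and Theorem \ref{teor:star-nomin} for part \ref{teor:(semi)star-fd:nomin}, and your write-up simply spells out why the hypotheses of those theorems are met (finite chains of equal cardinality are order-isomorphic, and $\epsilon(M)=1$ encodes principality via Proposition \ref{prop:omegaepsilon-V}\ref{prop:omegaepsilon-V:princ}).
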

\begin{proof}
\ref{teor:(semi)star-fd:(semi)} follows from Theorem \ref{teor:(semi)star}, while \ref{teor:(semi)star-fd:nomin} follows from Theorem \ref{teor:star-nomin}.
\end{proof}

Let now $\mathcal{P}$ be a finite rooted tree which is also homeomorphically irreducible. Then, there are finite-dimensional semilocal Pr\"ufer domains such that $\hiSpec(D)\simeq\mathcal{P}$ \cite[Theorem 3.1]{lewis-spectrum}; by Theorem \ref{teor:semistar-fd}, the cardinality of $\inssemistar(D)$ depends only on $\omega(P)$, as $P$ ranges in $\hiSpec(D)$. Hence, if we label the elements of $\mathcal{P}$ as $\{(0),P_1,\ldots,P_k\}$, we can define a function $\Sigma_\mathcal{P}:\insN^k\longrightarrow\insN$ such that $\Sigma_\mathcal{P}(a_1,\ldots,a_k)$ is the cardinality of $\inssemistar(D)$, where $\hiSpec(D)\simeq\mathcal{P}$ and $\omega(P_i)=a_i$ for each $i$.

Similarly, if $\mathcal{P}\simeq\hiSpec(D)=\{(0),P_1,\ldots,P_k,M_1,\ldots,M_t\}$, where $M_1,\ldots,M_t$ are the maximal ideals of $D$, we define $\widetilde{\Sigma}_\mathcal{P}$ as the function $\insN^{k+t}\times\{1,2\}^t\longrightarrow\insN$ such that $\Sigma_\mathcal{P}(a_1,\ldots,a_k,b_1,\ldots,b_t,c_1,\ldots,c_t)$ is the cardinality of $\inssmstar(D)$, where $\omega(P_i)=a_i$, $\omega(M_j)=b_j$ and $\epsilon(M_l)=c_l$ for each $i,j,l$.

To study what kind of functions $\Sigma_\mathcal{P}$ and $\widetilde{\Sigma}_\mathcal{P}$ are, we shall use the following extension of \cite[Theorem 1]{stanley-posets}; we will denote by $\underline{n}$ the set $\{1,\ldots,n\}$, endowed with the usual ordering.
\begin{prop}\label{prop:poset}
Let $\mathcal{P},\mathcal{Q}$ be two partially ordered sets, and let $H_{\mathcal{P},\mathcal{Q}}(n):=|\hom(\mathcal{P},\mathcal{Q}\oplus\underline{n})|$. Then, $H_{\mathcal{P},\mathcal{Q}}$ is a polynomial of degree $|\mathcal{P}|$.
\end{prop}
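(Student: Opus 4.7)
The plan is to reduce to Stanley's theorem via a bijection that records where each order-preserving map sends points: into $\mathcal{Q}$ versus into $\underline{n}$. Given $\phi\in\hom(\mathcal{P},\mathcal{Q}\oplus\underline{n})$, let $D(\phi):=\phi^{-1}(\mathcal{Q})$. The first step would be to check that $D(\phi)$ is always a down-set of $\mathcal{P}$: if $x\leq y$ and $y\in D(\phi)$, then $\phi(x)\leq\phi(y)\in\mathcal{Q}$ forces $\phi(x)\in\mathcal{Q}$, because every element of $\underline{n}$ is strictly above every element of $\mathcal{Q}$ in the ordinal sum. Conversely, given any down-set $D\subseteq\mathcal{P}$ with complement $U:=\mathcal{P}\setminus D$ and any pair $(\alpha,\beta)\in\hom(D,\mathcal{Q})\times\hom(U,\underline{n})$, the disjoint union $\alpha\sqcup\beta$ defines an order-preserving map $\mathcal{P}\to\mathcal{Q}\oplus\underline{n}$: the only comparabilities crossing the partition $\mathcal{P}=D\sqcup U$ are of the form $x\leq y$ with $x\in D$ and $y\in U$, and then $\alpha(x)\in\mathcal{Q}$ and $\beta(y)\in\underline{n}$ are automatically comparable in the ordinal sum.

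This bijection yields the identity
\begin{equation*}
H_{\mathcal{P},\mathcal{Q}}(n)=\sum_{D}|\hom(D,\mathcal{Q})|\cdot|\hom(\mathcal{P}\setminus D,\underline{n})|,
\end{equation*}
where the sum ranges over the finitely many down-sets $D$ of $\mathcal{P}$. I would then invoke Stanley's theorem \cite[Theorem 1]{stanley-posets}, which asserts that $n\mapsto|\hom(\mathcal{P}\setminus D,\underline{n})|$ is a polynomial in $n$ of degree exactly $|\mathcal{P}\setminus D|$ with strictly positive leading coefficient (namely the number of linear extensions of $\mathcal{P}\setminus D$ divided by $|\mathcal{P}\setminus D|!$). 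Since the coefficients $|\hom(D,\mathcal{Q})|$ are nonnegative integers independent of $n$, $H_{\mathcal{P},\mathcal{Q}}$ is a finite sum of polynomials with nonnegative coefficients, hence itself a polynomial in $n$.

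It remains to identify the degree. Only the summand corresponding to $D=\emptyset$ can contribute in degree $|\mathcal{P}|$: its contribution is $1\cdot|\hom(\mathcal{P},\underline{n})|$, a polynomial of degree $|\mathcal{P}|$ with positive leading coefficient, while every other down-set $D$ has complement of strictly smaller cardinality and therefore contributes a polynomial of degree $<|\mathcal{P}|$. Because the positive leading coefficient from $D=\emptyset$ cannot be cancelled by lower-degree contributions, $H_{\mathcal{P},\mathcal{Q}}$ has degree exactly $|\mathcal{P}|$.

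I do not anticipate a serious obstacle: the argument is essentially combinatorial bookkeeping on top of Stanley's theorem. The one step worth scrutinizing is the verification of the bijection, in particular that $D(\phi)$ is always a down-set, which crucially uses the defining property of the ordinal sum that every element of $\underline{n}$ lies above every element of $\mathcal{Q}$.
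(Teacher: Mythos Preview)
Your proposal is correct and follows essentially the same argument as the paper: partition $\hom(\mathcal{P},\mathcal{Q}\oplus\underline{n})$ according to the preimage of $\mathcal{Q}$, observe that this preimage is a down-set so the map splits as an independent pair, apply Stanley's theorem to the $\underline{n}$-factor, and isolate the $D=\emptyset$ summand for the degree. If anything, you are slightly more explicit than the paper in verifying the down-set property and in noting that the positive leading coefficient from $D=\emptyset$ cannot be cancelled.
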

\begin{proof}
For any order-preserving map $\psi:\mathcal{P}\longrightarrow\mathcal{Q}\oplus\underline{n}$, let ${\downarrow}\psi:=\{p\in\mathcal{P}\mid \psi(p)\in\mathcal{Q}\}$ and ${\uparrow}\psi:=\{p\in\mathcal{P}\mid \psi(p)\in\underline{n}\}$. Then, if $p\in{\downarrow}\psi$ and $q\in{\uparrow}\psi$, we have $p\leq q$. We can see any $\psi\in\hom(\mathcal{P},\mathcal{Q}\oplus\underline{n})$ as the union of a map $\psi_1:{\downarrow}\psi\longrightarrow\mathcal{Q}$ and a map $\psi_2:{\uparrow}\psi\longrightarrow\underline{n}$, both of which are order-preserving, that are independent one from the other.

For any $\Delta$, let $\hom^\Delta(\mathcal{P},\mathcal{Q}\oplus\underline{n}):=\{\psi\in\hom(\mathcal{P},\mathcal{Q}\oplus\underline{n})\mid {\downarrow}\psi=\Delta\}$. Clearly, $\hom(\mathcal{P},\mathcal{Q}\oplus\underline{n})$ is the union of the various $\hom^\Delta$; moreover, by the previous reasoning, if $\Delta={\downarrow}\psi$  for some $\psi$, we have
\begin{equation*}
|\hom^\Delta(\mathcal{P},\mathcal{Q}\oplus\underline{n})|=|\hom(\Delta,\mathcal{Q})|\cdot|\hom(\mathcal{P}\setminus\Delta,\underline{n})|.
\end{equation*}
For a fixed $\mathcal{Q}$, the first factor depends uniquely on $\Delta$. On the other hand, by \cite[Theorem 1]{stanley-posets}, the second factor is a polynomial $H_{\mathcal{P}\setminus\Delta}$ of degree $|\mathcal{P}\setminus\Delta|$. Since $H_{\mathcal{P},\mathcal{Q}}(n)$ is the sum of the cardinalities of the $\hom^\Delta$, also $H_{\mathcal{P},\mathcal{Q}}$ is a polynomial; moreover, there is an unique summand of maximal degree, namely $|\hom^\emptyset(\mathcal{P},\mathcal{Q}\oplus\underline{n})|=|\hom(\mathcal{P},\underline{n})|$, whose degree is $|\mathcal{P}|$. Hence, $H_{\mathcal{P},\mathcal{Q}}$ has degree $|\mathcal{P}|$.
\end{proof}

\begin{oss}~\label{oss:posets}
\begin{enumerate}
\item If $\mathcal{Q}=\emptyset$, the result above falls back to \cite[Theorem 1]{stanley-posets}.
\item\label{oss:posets:Qn} If $\mathcal{P}=\underline{k}$ is linearly ordered, we denote $H_{\underline{k},\emptyset}$ as $H_k$. Order-preserving maps from $\underline{k}$ to $\underline{n}$ correspond to ways of dividing $\underline{n}$ into $k$ (possibly empty) segments, or equivalently to combinations with repetition of $k$ elements in $\{1,\ldots,n\}$; therefore, $H_k=\binom{n+k-1}{k}$. For example, $H_1(n)=n$, while $H_2(n)=\frac{n(n+1)}{2}$ and $H_3(n)=\frac{n(n+1)(n+2)}{6}$.
\end{enumerate}
\end{oss}

\begin{teor}\label{teor:num-semistar}
Let $\mathcal{P}=\{0,p_1,\ldots,p_n\}$ be a finite rooted homeomorphically irreducible tree, with root $0$, and let $\{p_1,\ldots,p_k\}$ be the minimal elements of $\mathcal{P}\setminus\{0\}$. Then, for every $b_{k+1},\ldots,b_n\inN$, the function
\begin{equation*}
\pi_\mathcal{P}(X_1,\ldots,X_k):=\Sigma_\mathcal{P}(X_1,\ldots,X_k,b_{k+1},\ldots,b_n)
\end{equation*}
is a polynomial of degree $k\cdot 2^{k-1}$.
\end{teor}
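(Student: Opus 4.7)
The plan is to combine Theorem~\ref{teor:sstar->hom} with Proposition~\ref{prop:poset}: decompose $|\inssemistar(D)|$ as a sum over admissible supports $\Delta\subseteq\SkOver(D)$, identify each summand as a polynomial in the $X_j$'s, and isolate a unique top-degree contribution.

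Fix a semilocal Pr\"ufer domain $D$ realizing $\hiSpec(D)\simeq\mathcal{P}$ with $\omega(p_i)=X_i$ for $i\le k$ and $\omega(p_j)=b_j$ for $j>k$, and let $\Theta=\{T_1,\dots,T_k\}$ be its standard decomposition, indexed so that $T_j$ corresponds to $p_j$ in the sense of Proposition~\ref{prop:nuSkOver}. Then, as a poset, $\SkOver(D)$ is the Boolean lattice on $\{1,\dots,k\}$ via $S\mapsto\bigcap_{i\in S}T_i$, and $\bigcap_{i\in S}T_i\subseteq T_j$ if and only if $j\in S$; in particular, for every fixed $j$ exactly $2^{k-1}$ elements of $\SkOver(D)$ are contained in $T_j$. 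For each $T_j$, the standard decomposition is $\{T_j\}$ itself and $p_jT_j\subseteq\Jac(T_j)$; hence Proposition~\ref{prop:cutting} together with the identification $\insfstar(T_j)=\inssemistar(T_j)\setminus\{\wedge_{\{K\}}\}$ from \S2.5 gives an order isomorphism
\[
\insfstar(T_j)\simeq\mathcal{Q}_j\oplus\underline{X_j-1},
\]
where the chain $\underline{X_j-1}$ is $\inssemistar(Z(p_j))\setminus\{d,\wedge_{\{K\}}\}$ (of cardinality $X_j-1$ by Proposition~\ref{prop:omegaepsilon-V}) and $\mathcal{Q}_j:=\inssemistar(T_j/p_j)$ depends only on $(b_{k+1},\dots,b_n)$, not on the $X_i$'s.

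Applying Proposition~\ref{prop:poset} with $\mathcal{Q}=\mathcal{Q}_j$ and $n=X_j-1$ shows that $|\hom(\Delta(T_j),\insfstar(T_j))|$ is a polynomial in $X_j$ of degree exactly $|\Delta(T_j)|$, with strictly positive leading coefficient (coming from the $|\hom(\Delta(T_j),\underline{X_j-1})|$-summand in the proof of Proposition~\ref{prop:poset}). Theorem~\ref{teor:sstar->hom} then yields
\[
\pi_\mathcal{P}(X_1,\dots,X_k)=1+\sum_{\Delta\neq\{K\}}\prod_{\substack{j \\ \Delta(T_j)\neq\emptyset}}|\hom(\Delta(T_j),\insfstar(T_j))|,
\]
where $\Delta$ ranges over intersection-closed subsets of $\SkOver(D)$ containing $K$. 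Since different $T_j$'s contribute polynomials in distinct variables, the $\Delta$-th summand is a polynomial of total degree $\sum_j|\Delta(T_j)|$ with positive leading coefficient; the bound $\sum_j|\Delta(T_j)|\le k\cdot 2^{k-1}$ is saturated precisely when $\Delta\supseteq\bigcup_j\{A\in\SkOver(D):A\subseteq T_j\}=\SkOver(D)\setminus\{K\}$, i.e., exactly when $\Delta=\SkOver(D)$. Uniqueness of the maximizer, together with positivity of leading coefficients, prevents cancellation of the leading monomial $X_1^{2^{k-1}}\cdots X_k^{2^{k-1}}$, so $\deg\pi_\mathcal{P}=k\cdot 2^{k-1}$.

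The principal obstacle is establishing the decomposition $\insfstar(T_j)\simeq\mathcal{Q}_j\oplus\underline{X_j-1}$ with $\mathcal{Q}_j$ independent of the $X_i$'s and $X_j$ appearing only as the length of the top chain; without this structural fact the hypotheses of Proposition~\ref{prop:poset} could not be checked and one could not read off a polynomial of the correct degree. Everything else is clean combinatorial bookkeeping on the Boolean lattice $\SkOver(D)$.
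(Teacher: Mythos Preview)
Your argument is correct and follows the same strategy as the paper: decompose $|\inssemistar(D)|$ over admissible supports via Theorem~\ref{teor:sstar->hom}, express $\insfstar(T_j)$ as an ordinal sum with a chain of length governed by $\omega(p_j)$ via Proposition~\ref{prop:cutting}, invoke Proposition~\ref{prop:poset}, and identify $\Delta=\SkOver(D)$ as the unique support contributing the top degree $k\cdot 2^{k-1}$. The only cosmetic difference is where you place the boundary in the ordinal sum---you take $\mathcal{Q}_j=\inssemistar(T_j/p_j)$ with chain $\underline{X_j-1}$, while the paper shifts the top element $\wedge_{\{k\}}$ of $\inssemistar(T_j/p_j)$ into the chain to get $\mathcal{Q}^{(T)}=\inssemistar(T_j/p_j)\setminus\{\wedge_{\{k\}}\}$ with chain $\underline{X_j}$---and you make the non-cancellation of leading terms explicit, which the paper leaves tacit.
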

\begin{proof}
Let $D$ be a semilocal finite-dimensional domain such that $\hiSpec(D)=\{(0),P_1,\ldots,P_n\}\simeq\mathcal{P}$, with $\omega(P_i)=b_i$ for $k<i\leq n$. By definition, the cardinality of $\inssemistar(D)$ is equal to the sum of the cardinalities of $\inssemisupp{\Delta}(D)$, as $\Delta$ ranges among the possible supports. Let $\Theta$ be the standard decomposition of $D$.

For every such $\Delta$, by Theorem \ref{teor:semistar} we have
\begin{equation*}
|\inssemisupp{\Delta}(D)|=\prod\{|\hom(\Delta(T),\insfstar(T))|:T\in\Theta,\Delta(T)\neq\emptyset\}|.
\end{equation*}
By Proposition \ref{prop:cutting}, $\insfstar(T)$ is equal to the union of $\inssemistar(T/P)$ and $\insfstar(T_P)\setminus\{d\}$, where $P$ is the minimal element of $\hiSpec(T)\setminus\{(0)\}$; moreover, $\inssemistar(T/P)$ has a maximum (namely $\wedge_{\{k\}}$, where $k$ is the quotient field of $T/P$), and thus we can write $\insfstar(T)$ as $\mathcal{Q}^{(T)}\oplus\underline{\omega(P)}$, where $\mathcal{Q}^{(T)}:=\inssemistar(T/P)\setminus\{\wedge_{\{k\}}\}$. Applying Proposition \ref{prop:poset}, we see that $|\hom(\Delta(T),\insfstar(T))|=H_{\Delta(T),\mathcal{Q}^{(T)}}(\omega(P))$ is a polynomial in $\omega(P)$ of degree $|\Delta(T)|$; hence, each $|\inssemisupp{\Delta}(D)|$ is a polynomial in $\omega(P_1),\ldots,\omega(P_k)$. In particular, $\pi_\mathcal{P}$ is a polynomial.

Moreover, the term of maximal degree of each $|\inssemisupp{\Delta}(D)|$ has degree $|\Delta(T)|$ in $\omega(P)$, where $P$ is the minimal element of $\hiSpec(T)\setminus\{(0)\}$; in particular, this degree is maximal when $\Delta(T)$ is just the set of intersections of the subsets of the standard decomposition $\Theta$ containing $T$, where it is $2^{k-1}$. Hence, the maximal term of $\pi_\mathcal{P}$ comes from the case $\Delta=\SkOver(D)$, where each $\omega(P)$ has degree $2^{k-1}$. It follows that the total degree of $\pi_\mathcal{P}$ is $k\cdot 2^{k-1}$.
\end{proof}

\begin{teor}\label{teor:num-(semi)star}
Let $\mathcal{P}:=\{0,p_1,\ldots,p_n,m_1,\ldots,m_t\}$ be a finite rooted homeomorphically irreducible tree, with root $0$, and let $\{p_1,\ldots,p_k\}$ be the minimal elements of $\mathcal{P}\setminus\{0\}$. Then, for every $b_{k+1},\ldots,b_n\inN$, $c_1,\ldots,c_t\in\{1,2\}$ the function
\begin{equation*}
\widetilde{\pi}_\mathcal{P}(X_1,\ldots,X_k):=\widetilde{\Sigma}_\mathcal{P}(X_1,\ldots,X_k,b_{k+1},\ldots,b_n,c_1,\ldots,c_t)
\end{equation*}
is a polynomial of degree $k(2^{k-1}-1)$.
\end{teor}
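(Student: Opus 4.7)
The plan is to mirror the proof of Theorem \ref{teor:num-semistar}, now tracking the additional constraint from Proposition \ref{prop:(semi)Gamma}: a semistar operation $\star$ lies in $\inssmstar(D)$ iff $D\in\supp(\star)$ and $\Gamma_T(\star)(D)\in\insstar(T)$ for every $T$ in the standard decomposition $\Theta$ of $D$. First I would partition by support, writing
\[
|\inssmstar(D)|=\sum_{\Delta}\prod_{T\in\Theta}|\tildhom(\Delta(T),\insfstar(T))|,
\]
where $\Delta$ ranges over the intersection-closed subsets of $\SkOver(D)$ containing $\{D,K\}$, and $\tildhom(\Delta(T),\insfstar(T))$ denotes the order-preserving maps $\psi:\Delta(T)\to\insfstar(T)$ with $\psi(D)\in\insstar(T)$.

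Next I would use the decomposition $\insfstar(T)=\mathcal{Q}^{(T)}\oplus\underline{X_{i(T)}}$ from the proof of Theorem \ref{teor:num-semistar}, where $X_{i(T)}=\omega(p_{i(T)})$ for the unique minimal element $p_{i(T)}$ of $\hiSpec(T)\setminus\{(0)\}$. The key new input, supplied by Proposition \ref{prop:cutting}(a), is that $\insstar(T)\cong\inssmstar(T/P_{i(T)})$ sits inside $\mathcal{Q}^{(T)}=\inssemistar(T/P_{i(T)})\setminus\{\wedge_{\{k\}}\}$; consequently the condition $\psi(D)\in\insstar(T)$ forces $\psi(D)$ into the bottom piece of the ordinal sum, i.e., $D\in{\downarrow}\psi$ in the language of Proposition \ref{prop:poset}.

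I would then rerun the proof of Proposition \ref{prop:poset}, but only summing over down-sets $\Lambda={\downarrow}\psi$ of $\Delta(T)$ that contain $D$:
\[
|\tildhom(\Delta(T),\insfstar(T))|=\sum_{\Lambda\ni D}N_\Lambda^{(T)}\cdot H_{\Delta(T)\setminus\Lambda}(X_{i(T)}),
\]
where $N_\Lambda^{(T)}$, counting order-preserving maps $\Lambda\to\mathcal{Q}^{(T)}$ that send $D$ into $\insstar(T)$, is independent of $X_{i(T)}$. Since $D$ is the minimum of $\SkOver(D)$, the smallest admissible $\Lambda$ is $\{D\}$, so this factor is a polynomial in $X_{i(T)}$ of degree at most $|\Delta(T)|-1$---one less than in the proof of Theorem \ref{teor:num-semistar}. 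Multiplying over $T\in\Theta$ and summing, the dominant term corresponds to $\Delta=\SkOver(D)$, for which $|\Delta(T)|=2^{k-1}$ for each $T$, yielding total degree $k(2^{k-1}-1)$.

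The main obstacle will be ruling out cancellation at the top degree. I would argue that for any proper $\Delta\subsetneq\SkOver(D)$, a missing element $A\in\SkOver(D)\setminus\Delta$ (necessarily different from $D$ and $K$) satisfies $A\subseteq T_j$ for some $j$, so $|\Delta(T_j)|<2^{k-1}$ and the corresponding total degree drops strictly. Hence only $\Delta=\SkOver(D)$ contributes to the monomial $\prod_i X_i^{2^{k-1}-1}$, and its coefficient equals $\prod_T|\insstar(T)|$ times the product of the leading coefficients of the order polynomials $H_{\Delta(T)\setminus\{D\}}$; both factors are strictly positive (the first because $d\in\insstar(T)$, the second by \cite[Theorem 1]{stanley-posets}), so the leading coefficient is nonzero and $\widetilde{\pi}_\mathcal{P}$ has degree exactly $k(2^{k-1}-1)$.
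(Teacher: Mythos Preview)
Your proof is correct and takes essentially the same approach as the paper: both restrict to supports $\Delta\ni D$, show each factor $|\tildhom(\Delta(T),\insfstar(T))|$ is a polynomial of degree $|\Delta(T)|-1$ in $X_{i(T)}$ (using that $\insstar(T)$ lands in the bottom piece $\mathcal{Q}^{(T)}$ of the ordinal sum), and then identify the unique maximal contribution at $\Delta=\SkOver(D)$. The only cosmetic difference is that the paper partitions $\tildhom(\Delta(T),\insfstar(T))$ by the value $\psi(D)=\star\in\insstar(T)$ and applies Proposition~\ref{prop:poset} to $\hom(\Delta(T)\setminus\{D\},\{\sharp\geq\star\})$, whereas you partition by the down-set ${\downarrow}\psi$ (constrained to contain $D$) and work inside the proof of that proposition; your explicit non-cancellation check for the leading term is a detail the paper leaves implicit.
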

\begin{proof}
As in the proof of Theorem \ref{teor:num-semistar}, we need only to show that each $|\inssmsupp{\Delta}(D_P)|$ is a polynomial, and since we are considering (semi)star operations, we can consider only sets $\Delta$ containing $D$.

Consider a set $\Delta(T)$, and let $\Lambda(T)=\Delta(T)\setminus\{D\}$. For each $\star\in\insstar(T)$, set
\begin{equation*}
\tildhom_\star(\Delta(T),\insfstar(T)):=\{\psi\in\tildhom(\Delta(T),\insfstar(T))\mid \psi(D)=\star\}.
\end{equation*}
Then, the cardinality of $\tildhom_\star(\Delta(T),\insfstar(T))$ is equal to the cardinality of $\hom(\Lambda(T),\{\sharp\in\insfstar(T)\mid \sharp\geq\star\})$, which by Proposition \ref{prop:poset} is a polynomial of degree $|\Lambda(T)|=|\Delta(T)|-1$ in $\omega(P)$, where $P$ is the minimal element of $\hiSpec(T)\setminus\{(0)\}$ (note that a star operation on $T$ correspond to a star operation coming from $\inssemistar(T/P)$).

Following the reasoning of Theorem \ref{teor:num-semistar}, this is maximal when $|\Delta(T)|=2^{k-1}$; hence, $\widetilde{\pi}_\mathcal{P}$ is a polynomial of degree $k(2^{k-1}-1)$.
\end{proof}

A good measure of the complexity of the calculation of the polynomials $\pi_\mathcal{P}$ and $\widetilde{\pi}_\mathcal{P}$ is the \emph{height} $h(\mathcal{P})$ of $\mathcal{P}=\hiSpec(D)$, that is, the maximal length among the chains of $\mathcal{P}$. When the height is 0, $D$ is a field; hence, the first interesting case is when $h(\mathcal{P})=1$. In algebraic terms, this happens if and only if $D$ is $h$-local, that is, if $D$ is locally finite (which is automatic when $D$ is semilocal) and $D_MD_N=K$ for $M\neq K$ in $\Max(D)$ (see e.g. \cite{olberding_hlocal} for a study of Pr\"ufer $h$-local domains). 

In this case, the calculation of star and fractional star operations does not need the theory developed in this article; indeed, by \cite[Theorem 5.4]{starloc} (and Section \ref{sect:jaff}), if $D$ is $h$-local and $\Max(D)=\{M_1,\ldots,M_n\}$, then $|\insstar(D)|=\epsilon(M_1)\cdots\epsilon(M_n)$ while $|\insfstar(D)|=\omega(M_1)\cdots\omega(M_n)$. The case of semistar operations, on the other hand, is not so immediate, but it is a mere consequence of Theorem \ref{teor:num-semistar}.
\begin{cor}\label{cor:semistar-hloc}
There is a symmetric polynomial $\pi_n\in\insQ[X_1,\ldots,X_n]$ of degree $n\cdot 2^{n-1}$ such that, if $D$ is a $h$-local Pr\"ufer domain and $\Max(D)=\{M_1,\ldots,M_n\}$, then $|\inssemistar(D)|=\pi_n(\omega(M_1),\ldots,\omega(M_n))$.
\end{cor}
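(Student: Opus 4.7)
The plan is to specialize Theorem \ref{teor:num-semistar} to the ``star-shaped'' tree that arises from an $h$-local domain and then verify the symmetry of the resulting polynomial. If $D$ is an $h$-local Pr\"ufer domain with $\Max(D)=\{M_1,\ldots,M_n\}$, then $\hiSpec(D)$ consists of the root $(0)$ together with the $n$ leaves $M_1,\ldots,M_n$; call this rooted tree $\mathcal{P}_n$. Every nonzero element of $\mathcal{P}_n$ is already minimal in $\mathcal{P}_n\setminus\{(0)\}$, so Theorem \ref{teor:num-semistar} applies with $k=n$ and no auxiliary $b_j$-parameters, giving a polynomial $\pi_n(X_1,\ldots,X_n):=\Sigma_{\mathcal{P}_n}(X_1,\ldots,X_n)\in\insQ[X_1,\ldots,X_n]$ of degree $n\cdot 2^{n-1}$ such that $|\inssemistar(D)|=\pi_n(\omega(M_1),\ldots,\omega(M_n))$.

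To get symmetry, I would unwind $\pi_n$ explicitly. In the $h$-local case the standard decomposition is $\Theta=\{D_{M_1},\ldots,D_{M_n}\}$ and $\SkOver(D)$ is a Boolean lattice of $2^n$ rings indexed by the subsets $S\subseteq\{1,\ldots,n\}$ via $A_S:=\bigcap_{i\in S}D_{M_i}$ (with $A_\emptyset=K$); the subsets $\Delta\subseteq\SkOver(D)$ closed under intersection and containing $K$ correspond bijectively to union-closed families $\widehat{\Delta}\subseteq 2^{\{1,\ldots,n\}}$ containing $\emptyset$, and $\Delta(D_{M_i})$ corresponds to $\widehat{\Delta}_i:=\{S\in\widehat{\Delta}\mid i\in S\}$. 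Since each $\insfstar(D_{M_i})$ is linearly ordered with $\omega(M_i)$ elements, Theorem \ref{teor:sstar->hom} together with Proposition \ref{prop:poset} yields
\begin{equation*}
\pi_n(X_1,\ldots,X_n)=\sum_{\widehat{\Delta}}\,\prod_{i\colon\widehat{\Delta}_i\neq\emptyset}H_{\widehat{\Delta}_i}(X_i),
\end{equation*}
where $\widehat{\Delta}$ ranges over the union-closed subfamilies of $2^{\{1,\ldots,n\}}$ containing $\emptyset$ and $H_Q$ is the order polynomial from Remark \ref{oss:posets}; the contribution of $\widehat\Delta=\{\emptyset\}$ (the empty product) accounts for the single operation $\wedge_{\{K\}}$.

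With this explicit formula in hand, the natural $S_n$-action on $2^{\{1,\ldots,n\}}$ by relabeling coordinates is a poset automorphism that preserves union-closedness and satisfies $(\sigma\cdot\widehat{\Delta})_{\sigma(i)}\cong\widehat{\Delta}_i$ as posets, so $H_{(\sigma\cdot\widehat{\Delta})_{\sigma(i)}}=H_{\widehat{\Delta}_i}$; reindexing the sum by $\widehat{\Delta}\mapsto\sigma\cdot\widehat{\Delta}$ then gives $\pi_n(X_{\sigma(1)},\ldots,X_{\sigma(n)})=\pi_n(X_1,\ldots,X_n)$, proving $\pi_n$ is symmetric.

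The only substantive work, and the place I expect to need care, is in setting up the combinatorial dictionary: tracking the inclusion-reversal between $\SkOver(D)$ and $2^{\{1,\ldots,n\}}$, and identifying $(\sigma\cdot\widehat{\Delta})_{\sigma(i)}$ with $\widehat{\Delta}_i$ as sub-posets. A shorter but less structural alternative is to invoke Theorem \ref{teor:semistar-fd} directly: relabeling the maximal ideals of a fixed $D$ does not change $D$, so the polynomials $\pi_n(X_1,\ldots,X_n)$ and $\pi_n(X_{\sigma(1)},\ldots,X_{\sigma(n)})$ agree on every tuple $(\omega(M_1),\ldots,\omega(M_n))$ realizable by some $h$-local Pr\"ufer domain, and one then checks that such tuples exhaust all of $\insZ_{>0}^n$ (using valuation domains of prescribed dimension with principal primes on a common quotient field), which is Zariski-dense in $\insQ^n$.
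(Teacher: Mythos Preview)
Your proposal is correct and follows the same route as the paper: specialize Theorem \ref{teor:num-semistar} to the star-shaped tree $\mathcal{P}_n=\{(0),M_1,\ldots,M_n\}$ arising from an $h$-local domain, which immediately yields the polynomial $\pi_n$ of degree $n\cdot 2^{n-1}$. The paper's own proof is two lines and dispatches the symmetry claim with the word ``obviously,'' whereas you supply two independent justifications (the explicit Boolean-lattice formula with the $S_n$-action, and the relabeling-plus-Zariski-density argument); both are sound, but the second is closer in spirit to what the paper has in mind, since the automorphisms of $\mathcal{P}_n$ permute the leaves and $\Sigma_{\mathcal{P}_n}$ is intrinsically defined, so invariance under relabeling is immediate without unwinding the combinatorics.
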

\begin{proof}
If $D$ is $h$-local, then $\hiSpec(D)=\{(0)\}\cup\Max(D)$. Then, $\pi_n$ is a polynomial by Theorem \ref{teor:num-semistar}, and it is obviously symmetric.
\end{proof}

The case of (semi)star operations is more interesting, since we can actually make the numbers $\epsilon(M_i)$ variables, instead of parameters as it was in Theorem \ref{teor:num-(semi)star}.
\begin{prop}\label{prop:(semi)star-dim1}
There is a polynomial $\widetilde{\pi}_n\in\insQ[X_1,\ldots,X_n,Y_1,\ldots,Y_n]$ of degree $n\cdot 2^{n-1}$ such that, if $D$ is a $h$-local Pr\"ufer domain and $\Max(D)=\{M_1,\ldots,M_n\}$, then $|\inssmstar(D)|=\widetilde{\pi}_n(\omega(M_1),\ldots,\omega(M_n),\epsilon(M_1),\ldots,\epsilon(M_n))$.
\end{prop}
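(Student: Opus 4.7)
The plan is to bootstrap Theorem~\ref{teor:num-(semi)star}, which already produces the desired polynomial once the $\epsilon$-values are fixed as parameters; the new work is simply to promote those parameters to polynomial variables. In the $h$-local case, $\hiSpec(D) = \{(0), M_1, \ldots, M_n\}$ is the rooted ``star'' tree with $n$ leaves, so there are no intermediate $\omega$-parameters and the minimal elements of $\hiSpec(D) \setminus \{(0)\}$ are exactly the maximal ideals. For each fixed tuple $\mathbf{c} = (c_1, \ldots, c_n) \in \{1, 2\}^n$, Theorem~\ref{teor:num-(semi)star} then supplies a polynomial $\widetilde{\pi}_{n, \mathbf{c}}(X_1, \ldots, X_n) \in \insQ[X_1, \ldots, X_n]$ of degree $n(2^{n-1} - 1)$ whose value at $(\omega(M_1), \ldots, \omega(M_n))$ equals $|\inssmstar(D)|$ whenever $\epsilon(M_i) = c_i$ for every $i$.

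Next, I interpolate in the $Y_i$. Because each $\epsilon(M_i)$ takes only the two values $1$ and $2$, the linear polynomials $L_1(Y) := 2 - Y$ and $L_2(Y) := Y - 1$ satisfy $L_c(c') = \delta_{c, c'}$ for $c, c' \in \{1, 2\}$. Setting
\begin{equation*}
\widetilde{\pi}_n(X_1, \ldots, X_n, Y_1, \ldots, Y_n) := \sum_{\mathbf{c} \in \{1, 2\}^n} \widetilde{\pi}_{n, \mathbf{c}}(X_1, \ldots, X_n) \prod_{i=1}^n L_{c_i}(Y_i)
\end{equation*}
yields a polynomial in $\insQ[X_1, \ldots, X_n, Y_1, \ldots, Y_n]$ whose evaluation at $Y_i = \epsilon(M_i)$ annihilates every summand except the one indexed by $\mathbf{c} = (\epsilon(M_1), \ldots, \epsilon(M_n))$; the surviving term is precisely $|\inssmstar(D)|$ by the previous paragraph. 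Each summand has total degree $n(2^{n-1} - 1) + n = n \cdot 2^{n-1}$, so $\widetilde{\pi}_n$ has total degree at most $n \cdot 2^{n-1}$.

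The main obstacle I anticipate is confirming that the degree is not reduced by cancellation among the summands. The $2^n$ products $\prod_i L_{c_i}(Y_i)$ form a basis of the space of polynomials in $Y_1, \ldots, Y_n$ that are at most linear in each $Y_i$, so any cancellation in the top-degree homogeneous part of $\widetilde{\pi}_n$ would force the leading homogeneous component of every single $\widetilde{\pi}_{n, \mathbf{c}}$ to vanish, contradicting the degree statement in Theorem~\ref{teor:num-(semi)star}. Consequently the degree of $\widetilde{\pi}_n$ is exactly $n \cdot 2^{n-1}$, as claimed.
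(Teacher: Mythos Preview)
Your interpolation construction correctly produces a polynomial $\widetilde{\pi}_n$ that evaluates to $|\inssmstar(D)|$ and has total degree at most $n\cdot 2^{n-1}$; as a route to the \emph{existence} of the polynomial this is a legitimate alternative to the paper's direct computation. The gap is in the final paragraph, where you argue the degree is \emph{exactly} $n\cdot 2^{n-1}$.

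The $2^n$ products $\prod_i L_{c_i}(Y_i)$ are indeed a basis of the space of polynomials at most linear in each $Y_i$, but their degree-$n$ homogeneous parts are all $\pm Y_1\cdots Y_n$ and hence linearly \emph{dependent}. Consequently the degree-$(n\cdot 2^{n-1})$ homogeneous part of your $\widetilde{\pi}_n$ is
\begin{equation*}
\left(\sum_{\mathbf{c}\in\{1,2\}^n} (-1)^{|\{i:c_i=1\}|}\,P_{\mathbf{c}}(X)\right) Y_1\cdots Y_n,
\end{equation*}
where $P_{\mathbf{c}}$ is the leading homogeneous part of $\widetilde{\pi}_{n,\mathbf{c}}$; this alternating sum can certainly vanish without any individual $P_{\mathbf{c}}$ vanishing (for instance whenever all the $P_{\mathbf{c}}$ coincide). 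So the basis property does not rule out cancellation, and the step ``would force the leading homogeneous component of every single $\widetilde{\pi}_{n,\mathbf{c}}$ to vanish'' is not justified.

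The paper avoids this by writing each factor directly as a polynomial in both $\omega(M_i)$ and $\epsilon(M_i)$:
\begin{equation*}
|\tildhom(\Delta(D_{M_i}),\insfstar(D_{M_i}))|=H_{\Lambda(D_{M_i}),\emptyset}(\omega(M_i))+(\epsilon(M_i)-1)\,H_{\Lambda(D_{M_i}),\emptyset}(\omega(M_i)-1),
\end{equation*}
so that the leading monomial $X_i^{2^{n-1}-1}Y_i$ is visibly present in each factor, and the product over $i$ (for $\Delta=\SkOver(D)$) has degree exactly $n\cdot 2^{n-1}$. You can repair your own argument by actually computing the alternating sum above: from this formula one reads $P_{\mathbf{c}}=2^{|\{i:c_i=2\}|}P_{(1,\ldots,1)}$, whence the sum equals $\prod_i(2-1)\cdot P_{(1,\ldots,1)}=P_{(1,\ldots,1)}\neq 0$. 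But notice that this repair already relies on the paper's explicit formula, so the interpolation shortcut does not truly bypass that computation.
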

\begin{proof}
As in the proof of Theorem \ref{teor:num-(semi)star}, we must calculate the cardinality of the sets $\tildhom_\star(\Delta(T),\insfstar(T)):=\{\psi\in\tildhom(\Delta(T),\insfstar(T))\mid \psi(D)=\star\}$, as $T$ ranges in the standard decomposition of $D$ and $\star\in\insstar(T)$.

Since $D$ is $h$-local, each $T$ is a localization at a maximal ideal of $D$; hence, each $T=D_P$ is a valuation domain, and the possible star operations $\star$ are the identity and the $v$-operation. If $\star$ is the identity $d$, then 
\begin{equation*}
|\tildhom_\star(\Delta(D_P),\insfstar(D_P))|=|\hom(\Lambda(D_P),\insfstar(D_P))|=H_{\Lambda(D_P),\emptyset}(\omega(P))
\end{equation*}
(where $\Lambda(D_P)=\Delta(D_P)\setminus\{D_P\}$). On the other hand, if $\star=v$, then 
\begin{equation*}
|\tildhom_\star(\Delta(D_P),\insfstar(D_P))|=|\hom(\Lambda(D_P),\insfstar(D_P)\setminus\{d\})|=H_{\Lambda(D_P),\emptyset}(\omega(P)-1).
\end{equation*}

The latter summand exists only when $\epsilon(P)=2$; therefore, we have
\begin{equation*}
|\tildhom(\Delta(D_P),\insfstar(D_P))|=H_{\Lambda(D_P),\emptyset}(\omega(P))+(\epsilon(P)-1)H_{\Lambda(D_P),\emptyset}(\omega(P)-1).
\end{equation*}
Putting all together, we see that $\widetilde{\pi}_n$ is a polynomial of degree $2^{n-1}-1$ in each $X_i$ and $1$ in each $Y_i$; the total degree is thus $n\cdot 2^{n-1}$.
\end{proof}

We can use these results, along with Proposition \ref{prop:cutting}, to study star and fractional star operations when the height of $\hiSpec(D)$ is 2.
\begin{prop}\label{prop:num-star-fstar}
Let $D$ be a semilocal Pr\"ufer domain, and let $\hiSpec(D)=\{(0)\}\sqcup\mathcal{A}\sqcup\Max(D)$; suppose that the elements of $\mathcal{A}$ are pairwise not comparable. For any $P\in\mathcal{A}$, let $M(P):=\{M\in\Max(D)\mid P\subseteq M\}=\{M_{P,1},\ldots,M_{P,|M(P)|}\}$. Let $\omega$, $\epsilon$, $\pi_n$ and $\widetilde{\pi}_n$ as above. Then,
\begin{equation*}
|\insfstar(D)|=\prod_{P\in\mathcal{A}}[\pi_{|M(P)|}(\omega(M_{P,1}),\ldots,\omega(M_{P,|M(P)|}))+\omega(P)-1],
\end{equation*}
and
\begin{equation*}
|\insstar(D)|=\prod_{P\in\mathcal{A}}\widetilde{\pi}_{|M(P)|}(\omega(M_{P,1}),\ldots,\omega(M_{P,|M(P)|}),\epsilon(M_{P,1}),\ldots,\epsilon(M_{P,|M(P)|})).
\end{equation*}
\end{prop}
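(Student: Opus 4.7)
The plan is to express $\insfstar(D)$ and $\insstar(D)$ as products over the standard decomposition of $D$, and then to reduce each factor to an $h$-local computation already governed by $\pi_n$ and $\widetilde{\pi}_n$ via Corollary~\ref{cor:semistar-hloc} and Proposition~\ref{prop:(semi)star-dim1}.

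First I would identify the standard decomposition $\Theta$ of $D$ explicitly. The assumption that the elements of $\mathcal{A}$ are pairwise incomparable, combined with the fact that $\Spec(D)$ is a tree, guarantees that each maximal ideal of $D$ contains a unique $P\in\mathcal{A}$; hence the dependence classes of $\Max(D)$ are exactly the sets $M(P)$. So $\Theta=\{T_P\mid P\in\mathcal{A}\}$ with $T_P=\bigcap_{M\in M(P)}D_M$, and by the Jaffard-family isomorphism recalled in Section~\ref{sect:jaff} (together with its variant for fractional star operations),
\begin{equation*}
\insstar(D)\simeq\prod_{P\in\mathcal{A}}\insstar(T_P),\qquad \insfstar(D)\simeq\prod_{P\in\mathcal{A}}\insfstar(T_P).
\end{equation*}

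Next I would analyze each factor $T_P$ individually. Because $P$ lies in the Jacobson radical of $T_P$, the setup of Section~2.5 applies: $\insfstar(T_P)=\inssemistar(T_P)\setminus\{\wedge_{\{K\}}\}$, $\insstar(T_P)=\inssmstar(T_P)$, and Proposition~\ref{prop:cutting} gives the ordinal-sum decomposition
\begin{equation*}
\inssemistar(T_P)\simeq\inssemistar(T_P/P)\oplus\bigl(\inssemistar((T_P)_P)\setminus\{d\}\bigr),
\end{equation*}
which restricts to $\inssmstar(T_P)\simeq\inssmstar(T_P/P)$. The upper summand is easy to count: $(T_P)_P=D_P$ is a valuation domain, so Proposition~\ref{prop:omegaepsilon-V} gives $|\inssemistar(D_P)|=\omega(P)+1$. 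Removing $d$, and then also removing $\wedge_{\{K\}}$ (which sits in this upper piece of the ordinal sum), leaves a contribution of $\omega(P)-1$ to $|\insfstar(T_P)|$.

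The lower summand is $T_P/P$, a semilocal Prüfer domain whose spectrum has height one, i.e.\ $h$-local, with maximal ideals indexed by $M(P)$. The key technical verification is that the invariants transport: $Z_{T_P/P}(MT_P/P)=D_M/PD_M=Z_D(M)$, so $\omega_{T_P/P}(MT_P/P)=\omega(M)$; and principality of the maximal of a valuation ring is preserved modulo a strictly smaller prime (since $P\subsetneq xV$ would already force $M=xV$), so $\epsilon_{T_P/P}(MT_P/P)=\epsilon(M)$. Applying Corollary~\ref{cor:semistar-hloc} and Proposition~\ref{prop:(semi)star-dim1} to $T_P/P$ therefore gives
\begin{equation*}
|\inssemistar(T_P/P)|=\pi_{|M(P)|}(\omega(M_{P,1}),\ldots,\omega(M_{P,|M(P)|}))
\end{equation*}
and the analogous $\widetilde{\pi}_{|M(P)|}$-formula for $|\inssmstar(T_P/P)|$. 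Combining with the count for the upper piece, $|\insfstar(T_P)|=\pi_{|M(P)|}(\dots)+\omega(P)-1$ and $|\insstar(T_P)|=\widetilde{\pi}_{|M(P)|}(\dots)$; taking the product over $P\in\mathcal{A}$ yields the two formulas in the statement.

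The main bookkeeping hurdle is keeping track of which closures must be subtracted when passing between $\insfstar$, $\inssemistar$ and the ordinal-sum decomposition (in particular locating $\wedge_{\{K\}}$ in the upper summand, which is what produces the $-1$ in the $\insfstar(D)$ formula and explains why it disappears in the $\insstar(D)$ formula—$\wedge_{\{K\}}$ is never a $(\text{semi})$star operation); once this is settled, the argument is a matter of matching invariants and multiplying.
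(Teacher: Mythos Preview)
Your proof is correct and follows essentially the same route as the paper: identify the standard decomposition as $\{T_P\mid P\in\mathcal{A}\}$, factor $\insfstar(D)$ and $\insstar(D)$ as products over $\Theta$ via the Jaffard-family isomorphism, and then use Proposition~\ref{prop:cutting} to split each $\insfstar(T_P)$ into the $h$-local piece $\inssemistar(T_P/P)$ (counted by $\pi_{|M(P)|}$) and a valuation-domain piece contributing $\omega(P)-1$, while $\insstar(T_P)\simeq\inssmstar(T_P/P)$ is counted by $\widetilde{\pi}_{|M(P)|}$. Your extra verification that the invariants $\omega$ and $\epsilon$ transport to $T_P/P$, and your explicit tracking of where $\wedge_{\{K\}}$ sits in the ordinal sum, are details the paper leaves implicit but are handled correctly.
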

\begin{proof}
For every $P\in\mathcal{A}$, let $T(P):=\bigcap\{D_M\mid M\in M(P)\}$. Then, $\{T(P)\mid P\in\mathcal{A}\}$ is the standard decomposition of $D$; hence, $|\insfstar(D)|=\prod\{|\insfstar(T(P))|:P\in\mathcal{A}\}$, and likewise for $|\insstar(D)|$.

By Proposition \ref{prop:cutting}, for each $P$ the set $\insfstar(T(P))$ is equal to the ordinal sum of $\inssemistar(T/P)$ and $\insfstar(T(P)_{PT_P})\setminus\{d\}$; the cardinality of the former is $\pi_{|M(P)|}(M_{P,1},\ldots,M_{P,|M(P)|})$ (by Theorem \ref{teor:num-semistar}) while the cardinality of the latter is $\omega(P)-1$, since $T(P)_{PT_P}=Z(P)$. The first claim follows.

Analogously, $\insstar(T(P))$ corresponds bijectively to $\inssmstar(T/P)$, whose cardinality is given by $\widetilde{\pi}_n$ (by Theorem \ref{teor:num-(semi)star}). The second claim follows.
\end{proof}

We end the paper by calculating two of the polynomials $\pi_\mathcal{P}$ and $\widetilde{\pi}_\mathcal{P}$.
\begin{ex}\label{ex:pi2}
The calculation of $\pi_2$ and $\widetilde{\pi}_2$.

Let $D$ be a semilocal Pr\"ufer domain with $\hiSpec(D)=\{(0),M,N\}$. Then, $\SkOver(D)=\{D,D_M,D_N,K\}$; let $\Delta\subseteq\SkOver(D)$ be a possible support for a semistar operation on $D$. Then, $K\in\Delta$, and if $D_M,D_N\in\Delta$ then also $D\in\Delta$, Hence, there are seven acceptable $\Delta$.

\begin{description}
\item[$\Delta=\{K\}$] In this case, $\Delta(M)=\Delta(N)=\emptyset$, and we have a single semistar operation.

\item[$\Delta=\{D,K\}$] In this case, $\Delta(M)=\Delta(N)=\{D\}$ are both isomorphic to $\underline{1}$.

\item[$\Delta=\{D_M,K\}$] In this case, $\Delta(M)=\{D_M\}\simeq\underline{1}$ while $\Delta(N)=\emptyset$.

\item[$\Delta=\{D_N,K\}$] Symmetrically, $\Delta(M)=\emptyset$ while $\Delta(N)=\{D_N\}\simeq\underline{1}$.

\item[$\Delta=\{D,D_M,K\}$] In this case, $\Delta(M)=\{D,D_M\}\simeq \underline{2}$ while $\Delta(N)=\{D\}\simeq\underline{1}$.

\item[$\Delta=\{D,D_N,K\}$] Symmetrically, $\Delta(M)=\{D\}\simeq\underline{1}$ while $\Delta(N)=\{D,D_N\}\simeq\underline{2}$.

\item[$\Delta=\{D,D_M,D_N,K\}$] In this case, $\Delta(M)=\{D,D_M\}\simeq \underline{2}$ and $\Delta(M)=\{D,D_N\}\simeq\underline{2}$.
\end{description}

Let now $a:=\omega(M)$ and $b:=\omega(N)$. Adding all the cases, $\insstar(D)$ is equal to
\begin{equation*}
\begin{split}
& 1+H_1(a)H_1(b)+H_1(a)+H_1(b)+H_2(a)H_1(b)+H_1(a)H_2(b)+H_2(a)H_2(b)=\\
& =1+ab+a+b+\inv{2}a(a+1)b+\inv{2}ab(b+1)+\inv{4}a(a+1)b(b+1)=\\
& =1+a+b+\frac{9}{4}ab+\frac{3}{4}(a^2b+ab^2)+\inv{4}a^2b^2
\end{split}
\end{equation*}
and the last line represents exactly $\pi_2(a,b)$.

For the (semi)star operations, we must not consider the supports $\{K\}$, $\{D_M,K\}$ and $\{D_N,K\}$. Let $\epsilon_1:=\epsilon(M)$ and $\epsilon_2:=\epsilon(N)$.

The possible $\Delta(\cdot)$ are, as above, $\underline{1}$ and $\underline{2}$; in the former case, we have $H_1'(n,\epsilon)=\epsilon$ possibilities, while in the latter we have, following the proof of Theorem \ref{teor:(semi)star},
\begin{equation*}
H'_2(n,\epsilon)=H_1(n)+(\epsilon-1)(H_1(n-1))=n+(\epsilon-1)(n-1)=\epsilon n-\epsilon+1.
\end{equation*}

Thus the cardinality of $\inssmstar(D)$ is equal to:
\begin{equation*}
\begin{split}
& H'_1(a,\epsilon_1)H'_2(b,\epsilon_2)+H'_2(a,\epsilon_1)H'_1(b,\epsilon_2)+H'_1(a,\epsilon_1)H'_2(b,\epsilon_2)+H'_2(a,\epsilon)+H'_2(b,\epsilon)=\\
&=\epsilon_1\epsilon_2+(\epsilon_1 a-\epsilon_1+1)\epsilon_2+\epsilon_1(\epsilon_2 b-\epsilon_2+1)+(\epsilon_1 a-\epsilon_1+1)(\epsilon_2 b-\epsilon_2+1)=\\
&=(1+\epsilon_1a)(1+\epsilon_2b),
\end{split}
\end{equation*}
i.e., $\widetilde{\pi}_2(a,b,\epsilon_1,\epsilon_2)=(1+\epsilon_1a)(1+\epsilon_2b)$.

Using Proposition \ref{prop:num-star-fstar}, this provides a different proof of \cite[Theorem 4.3]{hmp-overprufer}. Indeed, suppose that $A$ is a Pr\"ufer domain with Y-shaped spectrum: that is, suppose that $\Max(A)=\{M_1,M_2\}$ and that the largest prime ideal in $M_1\cap M_2$ is $P\neq 0$. Under these hypothesis, using the previous calculation,
\begin{equation*}
|\insstar(A)|=(1+\epsilon(M_1)\omega(M_1))(1+\epsilon(M_2)\omega(M_2)).
\end{equation*}
In the notation of \cite[Theorem 4.3]{hmp-overprufer}, let $m_i$ (respectively, $n_i$) be the number of nonidempotent (respectively, idempotent) prime ideals strictly between $M_i$ and $P$. Then, $\omega(M_i)=m_i+2n_i+\epsilon(M_i)$; substituting this expression in the previous one, and considering the cases $\epsilon(M_i)=1$ and $\epsilon(M_i)=2$, we obtain exactly the statement of \cite[Theorem 4.3]{hmp-overprufer}.
\end{ex}

\begin{oss}
The previous example shows that $\widetilde{\pi}_2$ splits nicely into two factors, each one containing quantities relative to a single maximal ideal. This is most likely a phenomenon restricted to the case $n=2$. Indeed, by \cite[Theorem 4.6]{hmp-overprufer}, $\widetilde{\pi}_3(1,1,1,1,1,1)=45$; if $\widetilde{\pi}_3$ would have three factors, each one relative to one maximal ideal, by symmetry we should expect 45 to be the cube of a rational number, and this is clearly not the case.

It is also possible to repeat the calculation of Example \ref{ex:pi2} for three maximal ideals; the resulting polynomials $\pi_3$ and $\widetilde{\pi}_3$ turn out to be several lines long.
\end{oss}

\begin{ex}
Let $D$ be a Pr\"ufer domain such that $\hiSpec(D)$ is the following set:
\begin{equation*}
\begin{tikzcd}[every arrow/.append style={-},column sep=tiny]
M_1\arrow{dr} & & \arrow{dl}M_2 & & N\arrow{ddll}\\
& P\arrow{dr} & & \\
& & (0)
\end{tikzcd}
\end{equation*}
Suppose $\omega(M_1)=\omega(M_2)=1$ and let $\omega(P)=a$, $\omega(N)=b$. We want to calculate $|\inssemistar(D)|$.

We have $\Theta:=\{D_N,D_{\{P\}}\}$, where $D_{\{P\}}:=D_{M_1}\cap D_{M_2}$. As in the previous example, we obtain
\begin{equation*}
|\inssemistar(D)|=1+R_1(a)H_1(b)+R_1(a)+H_1(b)+R_2(a)H_1(b)+R_1(a)H_2(b)+R_2(a)H_2(b),
\end{equation*}
where $R_1(a)$ and $R_2(a)$ denotes the number of order-preserving maps from (respectively) $\underline{1}$ and $\underline{2}$ to $\insfstar(D_{\{P\}})$.

Let $A:=D_{\{P\}}/PD_{\{P\}}$, and let $k$ be its quotient field. By Proposition \ref{prop:cutting}, there is a bijection $\insfstar(D_{\{P\}})\leftrightarrow\inssemistar(A)\oplus(\insfstar(D_P)\setminus\{d\})$. Since $\omega(M_1)=\omega(M_2)=1$, the set $\inssemistar(A)$ corresponds to the subsets of $\SkOver(A)\setminus\{k\}$ that are closed by intersections; if $Z$ and $W$ are the maximal ideals of $A$, we have seven possibilities, namely $\emptyset$, $\{A\}$, $\{A_Z\}$, $\{A_W\}$, $\{A,A_Z\}$, $\{A,A_W\}$ and $\{A,A_Z,A_W\}$. Hence, the order on $\inssemistar(A)\setminus\{\wedge_{\{k\}}\}$ corresponds to the following:
\begin{equation*}
\begin{tikzcd}[every arrow/.append style={-},column sep=tiny]
& \{A,A_Z\}\arrow{dl}\arrow{dr} & & \{A,A_W\}\arrow{dl}\arrow{dr}\\
\{A_Z\}\arrow{drr} & & \{A\}\arrow{d} & & \{A_W\}\arrow{dll}\\
& & \{A,A_Z,A_W\}\\
\end{tikzcd}
\end{equation*}

It follows that $R_1(a)=6+a$, while
\begin{equation*}
R_2(a)=15+6a+\frac{a(a+1)}{2}=\inv{2}a^2+\frac{13}{2}a+15,
\end{equation*}
and thus (at the end of the calculation) we have
\begin{equation*}
|\inssemistar(D)|=\inv{4}a^2b^2+\frac{3}{4}a^2b+\frac{15}{4}ab^2+\frac{21}{2}b^2+ \frac{45}{4}ab+a+\frac{65}{2}b+7.
\end{equation*}
\end{ex}

\end{document}